\documentclass[12pt,oneside]{amsart}

\pdfoutput=1

\usepackage[section]{placeins}
\usepackage[margin=1in]{geometry}
\usepackage{mathabx}
\usepackage{xcolor}
\usepackage{graphicx}
\usepackage{amssymb}
\usepackage{tikz}
\usepackage{latexsym}
\usepackage{epsfig}
\usepackage{enumitem}
\usepackage{multicol}
\usepackage{wasysym}
\usepackage{amsmath}
\usepackage{amsthm}
\usepackage{amscd}
\usepackage{ulem}
\usepackage{soul}
\usepackage{multirow}
\usepackage{rotating}
\usepackage{pdflscape}
\usepackage{float}
\usepackage{dsfont}
\usepackage{comment}

\usepackage[colorlinks,citecolor=red,pagebackref,hypertexnames=false]{hyperref}
\hypersetup{
  colorlinks = true,
  citecolor=red,
  linkcolor  = red
}

\setlist{itemsep=.06125in}

\restylefloat{table}
\numberwithin{equation}{section}

\theoremstyle{plain}
\newtheorem{theorem}{Theorem}[section]
\newtheorem{lemma}[theorem]{Lemma}
\newtheorem{corollary}[theorem]{Corollary}
\newtheorem{proposition}[theorem]{Proposition}

\theoremstyle{definition}
\newtheorem{definition}[theorem]{Definition}

\theoremstyle{remark}
\newtheorem{remark}[theorem]{Remark}

\newcommand{\eps}{\varepsilon}

\date{\today}

\author{A. Iosevich, J. Iosevich, E. Palsson, and A. Yavicoli}
\address{Department of Mathematics, University of Rochester, Rochester, NY, USA}
\email{iosevich@gmail.com}
\address{Department of Applied Mathematics, Rochester Institute of Technology, Rochester, NY, USA}
\email{joshuaiosevich@gmail.com}
\address{Department of Mathematics, Virginia Tech, Blacksburg, VA, USA}
\email{palsson@vt.edu}
\address{Department of Mathematics, University of British Columbia, Vancouver, BC, Canada}
\email{yavicoli@math.ubc.ca}

\thanks{A. I. was supported in part by the National Science Foundation under NSF DMS - 2154232.}
\thanks{A. Y. was supported in part by the Natural Sciences and Engineering Research Council of Canada, NSERC (GR030571 and GR030540).}

\title{PDE propagation, sampling, and the Fourier ratio}

\begin{document}

\begin{abstract}
We study recovery from incomplete random spatial samples for discretized fields arising as fixed-time snapshots of partial differential equations. The organizing parameter is the Fourier ratio
$$
FR(g)=\frac{\|\widehat g\|_1}{\|\widehat g\|_2},
$$
which quantifies effective spectral dimension and governs stable $\ell^1$ recovery in bounded orthonormal sampling models.

Our main observation is that fixed-time PDE propagation can strictly improve Fourier ratio bounds relative to the discretized initial data. In dimension three, the wave snapshot operator introduces additional high-frequency decay, leading after discretization to Fourier ratio bounds that are uniformly controlled in the grid size (up to discretization errors), whereas the corresponding bounds for the initial discretization are typically polynomial in $N$. For the heat equation in any dimension, Gaussian frequency damping yields Fourier ratio bounds that are essentially independent of grid resolution for fixed positive time.

Combining these deterministic Fourier ratio improvements with standard $\ell^1$ recovery guarantees yields explicit sampling-rate bounds for stable reconstruction from missing spatial samples. Numerical experiments confirm that PDE propagation acts as a spectral preconditioner that lowers effective sampling complexity in practice.
\end{abstract}

\subjclass[2020]{42B05, 35L05, 35K05, 94A12, 65T50}

\keywords{Fourier ratio, wave equation, heat equation, compressed sensing, random sampling, discretization, signal recovery}

\maketitle

\section{Introduction}

The purpose of this paper is to extend the sampling framework of \cite{IPY2026} to basic PDE evolutions and to quantify how propagation changes sampling requirements for recovery from missing spatial data. The central observation is that fixed-time PDE propagators reduce the Fourier ratio of discretized fields. Because the number of random point samples required for stable $\ell^1$ recovery scales quadratically with the Fourier ratio (up to logarithmic factors), this reduction translates directly into lower sampling budgets. In dimension three, the fixed-time wave snapshot replaces a polynomial-in-$N$ sampling requirement for the discretized initial data by one that is uniformly controlled in $N$ for fixed positive time (up to discretization error terms). For the heat equation, the sampling requirement becomes essentially independent of the grid size for each fixed $t>0$.

From an imaging perspective, the results of this paper quantify how physical propagation mechanisms can reduce the sampling burden in reconstruction problems with missing data. In many sensing modalities the measured field is not the original signal but rather a propagated state governed by an underlying partial differential equation. Diffusion processes arise naturally in heat transfer, optical blur, and other smoothing phenomena, while wave propagation appears in acoustics, seismic imaging, and electromagnetic sensing. In such settings the observed data may be interpreted as a discretized snapshot of a PDE evolution. The analysis in this paper shows that this propagation can act as a spectral preconditioner: it suppresses high-frequency Fourier modes and thereby lowers the Fourier ratio of the discretized field. Because the sampling guarantees used in compressed sensing depend directly on this quantity, the effect is a reduction in the number of spatial samples required for stable recovery.

As in \cite{IPY2026}, the underlying sampling framework is the classical compressed sensing paradigm based on stable recovery from incomplete measurements via $\ell^1$ minimization and related convex programs, as initiated in \cite{CRT06} and developed further in, for example, \cite{FR13,Rau10,RV08}. A recurring theme is that we do not impose any sparsity hypothesis on the signal in physical space. Instead, compressibility is controlled by the Fourier ratio
$$
FR(g)=\frac{\|\widehat g\|_1}{\|\widehat g\|_2},
$$
which serves as a quantitative proxy for effective spectral dimension. This viewpoint fits naturally with uncertainty principle and structural perspectives in signal recovery, such as \cite{DS89,IM24}.

A key quantitative point in this paper is that we compare the Fourier ratio of a discretized PDE snapshot $g_t$ directly to the Fourier ratio of the discretized initial data $g$. The discretized initial data $g$ inherits only the Fourier decay coming from regularity of $f$. In contrast, the snapshot $g_t$ inherits both the regularity of $f$ and the additional frequency decay introduced by the propagator. In the wave case in three dimensions this replaces an a priori polynomial Fourier ratio bound for $g$ by a bound that is uniformly controlled in the grid size for fixed $t>0$ (up to discretization errors). In the heat case in any dimension, the Gaussian damping forces a Fourier ratio bound for $g_t$ that is essentially independent of the grid size for fixed positive time, while the corresponding Fourier ratio bound for $g$ is polynomial in $N$ in dimensions $d\ge 3$.

\subsection{Wave equation motivation}

Let $f$ be a real-valued function that is 1-periodic on $[0,1]^3$. Consider the solution $u$ of the wave equation
$$
u_{tt}=\Delta u,
\qquad
u(x,0)=0,
\qquad
u_t(x,0)=f(x).
$$
Write the Fourier series of $f$ as
$$
f(x)=\sum_{k\in\mathbb Z^d} a_k e^{2\pi i k\cdot x}.
$$
Then the periodic wave evolution satisfies
$$
u(x,t)= ta_0 + \sum_{k\in\mathbb Z^3\setminus\lbrace 0 \rbrace} \dfrac{\sin(2\pi t|k|)}{2\pi |k|}\, a_k e^{2\pi i k\cdot x}.
$$
Thus for fixed $t>0$ the mapping $f\mapsto u(\cdot,t)$ is a Fourier multiplier of order $-1$.

The guiding heuristic is that, after discretization on ${\mathbb Z}_N^3$, the additional factor $|k|^{-1}$ improves the high-frequency tail of the discrete Fourier coefficients. Since the Fourier ratio is an $\ell^1/\ell^2$ quantity, improved tail bounds for the discrete Fourier coefficients typically reduce $FR$ significantly.

\subsection{Heat equation motivation}

Let $f$ be a real-valued function that is 1-periodic on $[0,1]^d$. Consider the solution $v$ of the heat equation
$$
v_t=\Delta v,
\qquad
v(x,0)=f(x).
$$

Write the Fourier series of $f$ as
$$
f(x)=\sum_{k\in\mathbb Z^d} a_k e^{2\pi i k\cdot x}.
$$
Then the periodic heat evolution $v=e^{t\Delta}f$ satisfies
$$
v(x,t)=\sum_{k\in\mathbb Z^d} e^{-4\pi^2 t|k|^2}a_k e^{2\pi i k\cdot x}.
$$
Thus for fixed $t>0$ the mapping $f\mapsto v(\cdot,t)$ is a Fourier multiplier with symbol $e^{-4\pi^2 t|k|^2}$ on Fourier series coefficients, hence smoothing of infinite order.

After discretization on ${\mathbb Z}_N^d$, this Gaussian frequency damping forces the discrete Fourier coefficients to have an absolutely summable tail. From the Fourier ratio viewpoint, this means that the contribution of high frequencies to $\|\widehat g_t\|_1$ is uniformly controlled for each fixed $t>0$, so that the effective spectral dimension is small even when the initial discretized data is not sparse in physical space.

\subsection{Outlook: Averaging and dynamical smoothing} 

The PDE propagators considered in this work provide canonical examples of operators whose spectral multipliers suppress high-frequency Fourier modes and thereby improve the Fourier ratio. From this perspective, the smoothing effect underlying our recovery and compression results is fundamentally spectral in nature: it arises whenever an operator acts diagonally in the Fourier basis with multipliers that decay away from low frequencies. While elliptic and dispersive PDE flows furnish natural and analytically tractable instances of this principle, they are not the only mechanisms capable of producing such behavior.

A related phenomenon appears in time-averaging along dynamical flows. For example, consider
$$ A_T f(x)=\frac{1}{T}\int_0^T f(x-t\alpha)\,dt, $$ where $\alpha \in \mathbb{R}^d$ is a fixed velocity vector.

On the Fourier side this operator multiplies $\widehat f(k)$ by a factor of size $\lesssim (T|k\cdot\alpha|)^{-1}$ away from resonances, exhibiting a gain reminiscent of the order $-1$ decay observed for wave evolution. Although this decay is anisotropic and depends on Diophantine properties of $\alpha$, it suggests that Fourier-ratio improvement can also arise from spectral averaging mechanisms rooted in dynamics rather than PDE alone. A systematic study of such dynamical averaging operators and their influence on Fourier ratio, including the role of mixing and spectral gaps, lies beyond the scope of the present work and will be developed separately.

From a modeling perspective, such dynamical averaging operators arise naturally when measurements are collected by moving sensors that record time-averaged exposures along trajectories. In this interpretation, platform motion itself acts as a spectral preconditioner: time averaging suppresses high-frequency Fourier modes and thereby lowers the Fourier ratio of the observed signal. Thus, in addition to physical smoothing mechanisms such as diffusion, the geometry of sensing can itself reduce effective signal complexity and relax sampling requirements for stable recovery. Developing this dynamical viewpoint, including the role of Diophantine properties of trajectories and mixing phenomena, is left for future work.

\subsection{Organization and main results}

In summary, we prove that fixed-time wave and heat propagators strictly improve Fourier-ratio bounds after discretization, compared to the corresponding discretized initial data. In dimension three the wave snapshot replaces a polynomial-in-$N$ Fourier ratio bound by one that is uniformly controlled in $N$ for fixed time. In any dimension, the heat snapshot produces a Fourier ratio bound that is essentially independent of the grid size for fixed positive time. These deterministic improvements translate directly into reduced sampling budgets in standard $\ell^1$ recovery guarantees.

We work on the unit cube ${[0,1]}^d$ with 1-periodic boundary conditions. Given $f\in C^2([0,1]^d)$ which is 1-periodic in each variable, we define its discretization on the $N$ by $N$ by $\cdots$ by $N$ grid by
$$
g(x)=f(x/N),
\qquad
x\in{\mathbb Z}_N^d.
$$
We also consider the corresponding periodic wave and heat evolutions, and we define $g_t$ to be the discretized snapshot at fixed time $t$.

Our first main result is an a priori Fourier ratio bound for the discretized initial data $g$ in any dimension, which is typically polynomial in $N$ in dimensions $d\ge 3$. Our second main result is a Fourier ratio improvement for wave snapshots in dimension three, which is uniformly controlled in $N$ for fixed $t>0$ (up to discretization errors). Our third main result is a Fourier ratio improvement for heat snapshots in any dimension, which is essentially independent of $N$ for fixed positive time. We then combine these deterministic Fourier ratio bounds with the same $\ell^1$ recovery theorem used in \cite{IPY2026} to obtain sample complexity bounds for recovery from missing spatial samples.

From an inverse-problems perspective, the unknown object is a discretized field on ${\mathbb Z}_N^d$, while the data consist of pointwise samples on a subset $X \subset {\mathbb Z}_N^d$. The reconstruction algorithm is standard $\ell^1$ minimization in the Fourier domain. The novelty of the present work lies not in the optimization procedure, but in identifying PDE propagation as a deterministic mechanism that reduces the spectral complexity of the unknown and thereby lowers the number of spatial samples required for stable recovery.

\section{Imaging viewpoint: an inverse problem with a PDE-preconditioned prior}

This work can be interpreted as an inverse problem in imaging with missing data.
The unknown object is a discretized field $h:{\mathbb Z}_N^d \to {\mathbb R}$, and the data are pointwise samples on a subset $X \subset {\mathbb Z}_N^d$:
$$
y = P_X h + \eta,
$$
where $P_X$ denotes the coordinate projection operator defined by
$$
(P_X h)(x)=
\begin{cases}
h(x), & x\in X,\\
0, & x\notin X,
\end{cases}
$$
and $\eta$ represents measurement noise supported on $X$.
In imaging language, this is a missing-pixel problem (inpainting) or sensor dropout.
The goal is to reconstruct $h$ from $y$.

The novelty here is not a new optimization method, but rather a deterministic mechanism that improves recoverability by lowering the intrinsic complexity of the unknown.
Specifically, we take $h=g_t$, where $g_t$ is a fixed-time snapshot of a periodic PDE evolution applied to an underlying state $f$ and then discretized.
For the wave and heat equations, the snapshot map is diagonal in the Fourier basis and suppresses high frequencies.
After discretization, this suppression translates into improved discrete Fourier tail bounds and hence smaller Fourier ratio
$$
FR(g_t)=\frac{\|\widehat g_t\|_1}{\|\widehat g_t\|_2}.
$$
Because standard compressed sensing guarantees depend on an $\ell^1/\ell^2$ spectral proxy (here $FR$), the PDE snapshot effectively acts as a spectral preconditioner that reduces the number of spatial samples needed for stable recovery.

From an applications viewpoint, one can interpret the PDE evolution as part of the measurement physics or as a controlled preprocessing stage.
In diffusion-dominated imaging modalities, the observed state is naturally a heat-evolved version of an initial distribution.
In wave-driven modalities, a measured field can be a band-limited or smoothed transform of an initial source.
In either case, the present estimates quantify how such propagation changes sampling requirements for recovery from missing spatial samples.

The results in this paper fit within a broad inverse-problems theme:
identify structure or priors that make undersampled reconstruction possible, and quantify sampling rates in terms of a complexity parameter.
Here the complexity parameter is the Fourier ratio and the structure is induced by PDE propagation.

A natural question is whether one could simply apply an artificial smoothing filter before sampling and obtain similar improvements.
In many imaging and sensing settings the propagated field is the physically observed quantity (for example diffusion-blurred states or wave-propagated fields), so the smoothing is part of the forward model rather than an algorithmic choice.
The present results quantify how this physically induced spectral damping changes sampling requirements for missing-data recovery.

\section{Notation and discrete Fourier transform}

Throughout, $N\ge 2$ is an integer and ${\mathbb Z}_N={\mathbb Z}/N{\mathbb Z}$. For $x\in{\mathbb Z}_N^d$ and $m\in{\mathbb Z}_N^d$ we write
$$
x\cdot m=x_1m_1+\cdots+x_dm_d.
$$
Let
$$
\chi(t)=e^{2\pi i t/N}.
$$
For $h:{\mathbb Z}_N^d\to{\mathbb C}$ we define the discrete Fourier transform by
$$
\widehat h(m)=\frac{1}{N^{d/2}}\sum_{x\in{\mathbb Z}_N^d}\chi(-x\cdot m)h(x).
$$
With this normalization Parseval takes the form
$$
\|\widehat h\|_2=\|h\|_{L^2({\mathbb Z}_N^d)}.
$$
We define the Fourier ratio by
$$
FR(h)=\frac{\|\widehat h\|_1}{\|\widehat h\|_2}.
$$
We interpret $FR(h)$ as $0$ if $h\equiv 0$, and otherwise by the displayed formula. The Fourier ratio was first developed in \cite{A2025} and has been extended to a continuous setting in \cite{ILPY25}.

\begin{definition}[Wrapped Euclidean magnitude]
For $m\in{\mathbb Z}_N^d$ define $|m|$ to be the Euclidean norm of the representative $\widetilde m\in\{-\lfloor N/2\rfloor,\dots,\lfloor(N-1)/2\rfloor\}^d$ of $m$.
\end{definition}

\section{Main theorems}

\subsection{A priori Fourier ratio bounds for discretized initial data}

We begin by quantifying the Fourier ratio of the discretized initial data itself; this provides the baseline against which the wave and heat improvements will be measured. This extends one of the main results of \cite{IPY2026} to higher dimensions.

\begin{proposition}[Fourier ratio bound for discretized initial data] \label{prop:FR_initial}
Let $d\ge 1$ and let $f$ be a real-valued function on $[0,1]^d$ which is $1$-periodic in each variable and belongs to $C^2([0,1]^d)$. Let $g:{\mathbb Z}_N^d\to{\mathbb R}$ be its discretization,
$$
g(x)=f(x/N),
\qquad
x\in{\mathbb Z}_N^d.
$$
Assume that
$$
N\|f\|_{L^2([0,1]^d)}^2\ge 4C_d\|f\|_{C^2([0,1]^d)}^2.
$$
Then there exists a constant $C>0$ depending only on $d$ such that
$$
FR(g)\le A^{\mathrm{init}}
+ C\frac{\|f\|_{C^2([0,1]^d)}}{\|f\|_{L^2([0,1]^d)}}S_d(N)
+ C\frac{\|f\|_{C^2([0,1]^d)}}{\|f\|_{L^2([0,1]^d)}}\frac{1}{N},
$$
where
$$
A^{\mathrm{init}}=2\frac{\left|\int_{[0,1]^d} f(x)dx\right|}{\|f\|_{L^2([0,1]^d)}}
$$
and
$$
S_d(N)=
\begin{cases}
1, & d=1,\\
\log N, & d=2,\\
N^{d-2}, & d\ge 3.
\end{cases}
$$
\end{proposition}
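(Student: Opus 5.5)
The plan is to bound the numerator and the denominator of $FR(g)$ separately, using summation by parts for the numerator and a Riemann-sum comparison for the denominator.

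\textbf{Numerator.} Fix $m\ne 0$ with wrapped representative $\widetilde m$. Choose a coordinate $j$ with $|\widetilde m_j|\ge |m|/\sqrt d$, and sum by parts twice in the $x_j$ variable. Since $g$ is $N$-periodic this produces no boundary terms, and
$$
\widehat g(m)\,\bigl(\chi(m_j)-2+\chi(-m_j)\bigr)=\widehat{\Delta_j^2 g}(m),
$$
where $\Delta_j^2 g(x)=g(x+e_j)-2g(x)+g(x-e_j)$ is the second difference. The factor on the left has modulus $4\sin^2(\pi \widetilde m_j/N)\ge c\,\widetilde m_j^2/N^2$. By Taylor's theorem, $|\Delta_j^2 g(x)|\le \|f\|_{C^2}/N^2$ pointwise, so the trivial bound on the transform gives $|\widehat{\Delta_j^2 g}(m)|\le N^{d/2}\|f\|_{C^2}N^{-2}$. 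Combining these,
$$
|\widehat g(m)|\le C\,N^{d/2}\,\frac{\|f\|_{C^2}}{|m|^2}.
$$
Summing over $0<|m|\lesssim N$ in $\mathbb Z^d$ gives $\sum |m|^{-2}\lesssim S_d(N)$, which equals $1$, $\log N$, or $N^{d-2}$ according as $d=1$, $2$, or $\ge 3$. Hence
$$
\sum_{m\ne0}|\widehat g(m)|\le C N^{d/2}\|f\|_{C^2}S_d(N).
$$
The zero mode is handled separately: $\widehat g(0)=N^{-d/2}\sum_x f(x/N)=N^{d/2}\bigl(\int f+O(\|f\|_{C^2}/N)\bigr)$. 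Here the Riemann-sum error is in fact $O(N^{-2})$ by periodicity, but $O(1/N)$ from the first-derivative bound is all we need. This zero-mode estimate is the source of both the $A^{\mathrm{init}}$ term and the $1/N$ term in the statement.

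\textbf{Denominator.} By Parseval, $\|\widehat g\|_2^2=\sum_x f(x/N)^2=N^d\bigl(\|f\|_{L^2}^2+E\bigr)$, where $|E|\le C_d\|f\|_{C^2}^2/N$ is the Riemann-sum error for $f^2$, whose gradient is bounded by $2\|f\|_{C^2}^2$. The hypothesis $N\|f\|_{L^2}^2\ge 4C_d\|f\|_{C^2}^2$ (with $C_d$ taken to be this Riemann-sum constant) gives $|E|\le \tfrac14\|f\|_{L^2}^2$. Therefore
$$
\|\widehat g\|_2\ge \tfrac{\sqrt3}{2}N^{d/2}\|f\|_{L^2}\ge \tfrac12 N^{d/2}\|f\|_{L^2}.
$$

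\textbf{Conclusion and main obstacle.} Dividing the numerator bound by the denominator bound, the $N^{d/2}$ factors cancel. The zero mode contributes at most $2|\int f|/\|f\|_{L^2}+C\|f\|_{C^2}/(N\|f\|_{L^2})$, and the nonzero modes contribute at most $C\,\|f\|_{C^2}S_d(N)/\|f\|_{L^2}$; this is exactly the claimed bound. The step needing the most care is the lower bound $|\chi(m_j)-2+\chi(-m_j)|\gtrsim \widetilde m_j^2/N^2$, which must hold uniformly over the wrapped range $|\widetilde m_j|\le N/2$. It does, since $|\sin(\pi s)|\ge 2|s|$ for $|s|\le 1/2$. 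The other point to keep clean is the choice of $j$ depending on $m$, which is what allows a single second difference to capture $|m|^2$ rather than only $\widetilde m_j^2$.
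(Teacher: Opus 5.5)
Your proposal is correct and is essentially the paper's proof: the Riemann-sum lower bound on $\|g\|_{L^2({\mathbb Z}_N^d)}$ from the non-degeneracy hypothesis (Lemma \ref{lem:L2_riemann}), Riemann-sum control of $\widehat g(0)$ (Lemma \ref{lem:mean_riemann}), a double summation by parts in the dominant coordinate giving $|\widehat g(m)|\le C_d N^{d/2}\|f\|_{C^2}|m|^{-2}$ (Lemma \ref{lem:sbp}), and the lattice sum $\sum_{m\neq 0}|m|^{-2}\le C_d S_d(N)$ (Lemma \ref{lem:powersum}). The only difference is cosmetic: you use the centered second difference, with multiplier $-4\sin^2(\pi\widetilde m_j/N)$, where the paper squares a forward difference, and the resulting bounds are the same.
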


\begin{remark}
The $C^2$ hypothesis in Proposition 4.1 is essential for the scaling of the Fourier ratio bound obtained here. The proof ultimately relies on Lemma 7.3, where two discrete summation by parts steps produce a $|m|^{-2}$ decay estimate for the discrete Fourier transform. This second-order decay leads to the dimension-dependent growth term $S_d(N)$ appearing in the bound.

If one assumes only $C^1$ regularity, one obtains at best a single summation by parts and a $|m|^{-1}$ decay estimate. The resulting $\ell^1$ summation would then produce strictly worse $N$-dependence in every dimension, altering the form of $S_d(N)$ and weakening the theorem. In this sense, the $C^2$ condition is not merely technical but structurally tied to the strength of the Fourier ratio estimate.
\end{remark}

\begin{remark}
In dimensions $d\ge 3$ the bound in Proposition \ref{prop:FR_initial} is polynomial in $N$ for typical $C^2$ data, while the fixed-time wave and heat snapshot bounds below are uniformly controlled in $N$ for fixed $t>0$ up to discretization errors. This quantitative Fourier ratio improvement drives the sample complexity improvements in the sampling theorems below.
\end{remark}

\subsection{A Fourier ratio improvement for fixed-time wave snapshots}

\begin{definition}[Discrete wave snapshot]
Let $f$ be 1-periodic on $[0,1]^3$ and write its Fourier series
$$
f(x)=\sum_{k\in\mathbb Z^3} a_k e^{2\pi i k\cdot x}.
$$
For $t>0$ define the periodic wave snapshot $U_t f$ by
$$
U_t f(x)
=
\sum_{k\in\mathbb Z^3}
b_k(t)\, e^{2\pi i k\cdot x},
$$
where
$$
b_k(t)=
\begin{cases}
\dfrac{\sin(2\pi t|k|)}{2\pi |k|}\, a_k, & k\neq 0,\\[4pt]
t\,a_0, & k=0.
\end{cases}
$$
Let $g$ be the discretization of $f$ on ${\mathbb Z}_N^3$ and let $g_t$ be the discretization of $U_t f$ on ${\mathbb Z}_N^3$.
\end{definition}

\begin{remark}
The operator $U_t$ is smoothing of order $1$ in the Fourier multiplier sense because the multiplier has size $|m|^{-1}$ at high frequencies.
\end{remark}

\begin{remark}
The lower bound condition
$$
N\|f\|_{L^2([0,1]^3)}^2\ge 4C_3\|f\|_{C^3([0,1]^3)}^2
$$
is a discretization non-degeneracy assumption for the wave snapshot.
It should be viewed as a compatibility condition between the effective bandwidth $N$
and the ambient regularity of $f$.
In particular, for fixed nonzero $f$ it holds for all sufficiently large $N$
(depending on $f$), and it serves mainly to exclude degenerate low-frequency regimes
where discretization error dominates the sampled energy.
The $C^3$ regularity is used to obtain the $|m|^{-4}$ Fourier decay for the snapshot.
\end{remark}

\begin{theorem}[Fourier ratio bound for wave snapshots on ${\mathbb Z}_N^3$] \label{thm:wave_FR}
Let $f$ be a real-valued function on $[0,1]^3$ which is $1$-periodic in each variable and belongs to $C^3([0,1]^3)$. Fix $t>0$ and let
$$
U_t f
$$
denote the periodic wave snapshot at time $t$. Let
$$
g_t(x)= (U_t f)(x/N),
\qquad
x\in{\mathbb Z}_N^3,
$$
be its discretization on ${\mathbb Z}_N^3$.

Assume that
$$
N\|U_t f\|_{L^2([0,1]^3)}^2
\ge
4C_3\|U_t f\|_{C^2([0,1]^3)}^2,
$$
where $C_3$ is the constant appearing in Lemma \ref{lem:L2_riemann}.

Define
$$
A_{t}^{\mathrm{wave}}
=
2\frac{\left|\int_{[0,1]^3} (U_t f)(x)\,dx\right|}
{\|U_t f\|_{L^2([0,1]^3)}},
$$

$$
B_{t}^{\mathrm{wave}}
=
C_1(t)
\frac{\|f\|_{C^3([0,1]^3)}}
{\|U_t f\|_{L^2([0,1]^3)}},
\qquad
C_{N,t}^{\mathrm{wave}}
=
C_2(t)
\frac{\|f\|_{C^3([0,1]^3)}}
{\|U_t f\|_{L^2([0,1]^3)}}
\frac{1}{N}.
$$

Set
$$
r_{N,t}^{\mathrm{wave}}
=
A_{t}^{\mathrm{wave}}
+
B_{t}^{\mathrm{wave}}
+
C_{N,t}^{\mathrm{wave}}.
$$

Then
$$
FR(g_t)
\le
r_{N,t}^{\mathrm{wave}}.
$$
\end{theorem}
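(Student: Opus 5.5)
We follow the same template as Proposition \ref{prop:FR_initial}: split $\|\widehat{g_t}\|_1$ into the zero mode and the nonzero modes, bound the numerator from above, and bound the denominator $\|\widehat{g_t}\|_2=\|g_t\|_{L^2(\mathbb Z_N^3)}$ from below by a Riemann-sum comparison. The only genuinely new input is a Fourier decay estimate for $\widehat{g_t}(m)$ of order $|m|^{-4}$. This replaces the $|m|^{-2}$ decay of Lemma 7.3 and makes the nonzero-mode sum uniformly bounded in $N$ in dimension three.

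\textbf{Step 1 (denominator).} Apply Lemma \ref{lem:L2_riemann} to $U_tf$, which is $C^2$ because $f\in C^3$ and $U_t$ is a multiplier of order $-1$. This gives
$$\Bigl|\tfrac1{N^3}\sum_x |g_t(x)|^2-\|U_tf\|_{L^2}^2\Bigr|\le \tfrac{C_3}{N}\|U_tf\|_{C^2}^2 .$$
Under the hypothesis $N\|U_tf\|_2^2\ge 4C_3\|U_tf\|_{C^2}^2$ the error is at most $\frac14\|U_tf\|_2^2$, so
$$\|\widehat{g_t}\|_2\ge \tfrac{\sqrt3}{2}N^{3/2}\|U_tf\|_{L^2}\ge \tfrac12 N^{3/2}\|U_tf\|_{L^2}.$$

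\textbf{Step 2 (zero mode).} We have $\widehat{g_t}(0)=N^{-3/2}\sum_x (U_tf)(x/N)$. A Riemann-sum error estimate gives
$$\widehat{g_t}(0)=N^{3/2}\int U_tf+O\bigl(N^{1/2}\|U_tf\|_{C^1}\bigr).$$
Dividing by the lower bound from Step 1 yields $A_t^{\rm wave}$ plus a contribution of size $N^{-1}\|U_tf\|_{C^1}/\|U_tf\|_2$. Since $\|U_tf\|_{C^1}\lesssim_t\|f\|_{C^3}$, this contribution is absorbed into $C^{\rm wave}_{N,t}$.

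\textbf{Step 3 (nonzero modes; the main obstacle).} We need $|\widehat{g_t}(m)|\lesssim_t N^{3/2}\|f\|_{C^3}|m|^{-4}$ for $m\neq 0$ in the wrapped sense. The intended route is four discrete summation by parts steps on $\sum_x\chi(-x\cdot m)(U_tf)(x/N)$, in the coordinate $j$ with $|m_j|\ge |m|/\sqrt3$. Each step gains a factor $|1-\chi(m_j)|^{-1}\sim N/|m_j|$ and replaces the function by a difference quotient, which costs $N^{-1}$ times a derivative. Four steps therefore require $\|\partial_j^4 U_tf\|_\infty$. This is where the difficulty lies. $U_t$ gains only one derivative, and only in an $L^2$/Sobolev sense, not in sup norm. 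So from $f\in C^3$ we cannot directly conclude $U_tf\in C^4$.

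Two options for handling this:

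1. Use Kirchhoff's formula, $U_tf(x)=t\,\mathrm{avg}_{|y|=t}f(x-y)$ on the torus. Combined with $f\in C^3$, this gives three summation by parts steps with $\sup$ bounds. The fourth factor $|m|^{-1}$ would come from the spherical-average decay $|\widehat{\sigma_t}(\xi)|\lesssim |\xi|^{-1}$. To make this work on the discrete side, write $\widehat{g_t}(m)$ via Poisson summation as a sum over aliases $\sum_{\ell}b_{m+N\ell}(t)\,N^{3/2}$. Then use $|b_k(t)|\le |a_k|/(2\pi|k|)$ together with $|a_k|\lesssim \|f\|_{C^3}|k|^{-3}$.

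2. If a full $|k|^{-4}$ bound on the aliased sum is too optimistic, fall back on the Poisson-summation route alone. The term $\ell=0$ gives $N^{3/2}|m|^{-4}$ decay. The aliased terms $\ell\neq0$ have $|m+N\ell|\gtrsim N$, and their sum is $O(N^{3/2}N^{-4}\cdot N)$ after summing over $\ell$ with some room to spare. Their total contribution to $\|\widehat{g_t}\|_1/\|\widehat{g_t}\|_2$ is then $O(N^{-1})$ and goes into $C_{N,t}^{\rm wave}$.

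\textbf{Step 4 (summation).} Summing the decay bound gives
$$\sum_{0<|m|\le N}|m|^{-4}\lesssim\sum_{r\ge1}r^{2}r^{-4}<\infty$$
in dimension three. Hence the nonzero-mode contribution is at most $C_1(t)\,N^{3/2}\|f\|_{C^3}$. Dividing by the lower bound from Step 1 gives $B_t^{\rm wave}$. Collecting the error terms from Steps 2 and 3 gives $C^{\rm wave}_{N,t}$, and together these complete the bound $FR(g_t)\le r_{N,t}^{\rm wave}$.
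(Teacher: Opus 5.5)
Your proposal is correct and, once the detour through fourfold summation by parts and Kirchhoff's formula is set aside, it is essentially the paper's proof. That proof uses a Riemann-sum lower bound for $\|g_t\|_{L^2(\mathbb{Z}_N^3)}$ and Riemann-sum control of $\widehat{g_t}(0)$; for $m\neq 0$ it combines the aliasing identity $\widehat{g_t}(m)=N^{3/2}\sum_{\ell}b_{m+\ell N}(t)$ with $|b_k(t)|\le |a_k|/(2\pi|k|)\lesssim \|f\|_{C^3}|k|^{-4}$, and then sums $|m|^{-4}$ over $\mathbb{Z}_N^3$.

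The hedge in your Option 1 is unnecessary. Since $|m+\ell N|\ge \frac{N}{2\sqrt3}|\ell|$ for $\ell\neq 0$ and $|m|\le \frac{\sqrt3}{2}N$, one gets $\sum_{\ell}|m+\ell N|^{-4}\lesssim |m|^{-4}$ for each mode, which is Lemma~\ref{lem:alias_poly}. In your Option 2, the alias terms are $O(N^{3/2}N^{-4})$ per mode, not $O(N^{3/2}N^{-4}\cdot N)$. Summed over the $N^3$ modes this gives $O(N^{1/2})$, and that is what yields the relative $O(N^{-1})$ contribution you claim.
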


\begin{remark}
The bound depends on $\|U_t f\|_{L^2}$. For certain exceptional times and special data, this quantity may be small due to cancellation in the multiplier $\sin(t|k|)/|k|$. The estimate is meaningful whenever the snapshot has nontrivial $L^2$ energy, which holds generically for fixed $f$ and $t$.
\end{remark}

\begin{remark}[Quantitative comparison for the wave equation]
Let $g$ denote the discretization of the initial data $f$ on ${\mathbb Z}_N^3$ and let $g_t$ denote the discretized wave snapshot. Proposition \ref{prop:FR_initial} with $d=3$ yields an a priori bound
$$
FR(g)\le A^{\mathrm{init}}
+ C\frac{\|f\|_{C^2([0,1]^3)}}{\|f\|_{L^2([0,1]^3)}}N
+ C\frac{\|f\|_{C^2([0,1]^3)}}{\|f\|_{L^2([0,1]^3)}}\frac{1}{N}.
$$
In contrast, Theorem \ref{thm:wave_FR} yields
$$
FR(g_t)\le A_{t}^{\mathrm{wave}}
+ C_1(t)\frac{\|f\|_{C^3([0,1]^3)}}{\|U_t f\|_{L^2([0,1]^3)}}
+ C_2(t)\frac{\|f\|_{C^3([0,1]^3)}}{\|U_t f\|_{L^2([0,1]^3)}}\frac{1}{N}.
$$ 

Thus, at fixed $t>0$, the wave propagator replaces a polynomial Fourier ratio bound for $g$ by a bound that is uniformly controlled in $N$ up to discretization error terms.
\end{remark}

\begin{corollary}[Sampling-rate comparison in $d=3$ for wave snapshots]
Let $f$ and $t$ be as in Theorem \ref{thm:wave_FR}, and assume the discretization
non-degeneracy hypothesis in Proposition \ref{prop:FR_initial}.
Then the a priori bound for the initial discretization $g$ yields a sufficient
sample size scaling like $N^2$ (up to polylogarithmic factors) for stable recovery via
Theorem \ref{thm:ZNd_recovery},
while the bound for the snapshot $g_t$ yields a sufficient sample size that is
independent of $N$ up to polylogarithmic factors
(with constants depending on $t$ and $f$ through the norms appearing in
Theorem \ref{thm:wave_FR}).
\end{corollary}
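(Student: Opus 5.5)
The corollary follows from the two Fourier ratio bounds combined with the recovery theorem, Theorem \ref{thm:ZNd_recovery}, stated later in the paper. I assume that theorem has the standard form for bounded orthonormal systems: if the sample set $X\subset{\mathbb Z}_N^d$ is drawn uniformly at random with
$$
|X|\ \ge\ C\,FR(h)^2\,\log^{c}(N^d)
$$
(up to a probability parameter and a target accuracy), then $\ell^1$ minimization in the Fourier domain recovers $h$ stably from $P_Xh+\eta$. I also assume the error is measured relative to $\|\widehat h\|_2$. So it suffices to substitute each available upper bound for $FR$ into this requirement and read off the $N$-dependence of $r^2$.

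\textbf{Initial data.} Apply Proposition \ref{prop:FR_initial} with $d=3$, where $S_3(N)=N$; its hypothesis is exactly the assumed non-degeneracy condition. This gives
$$
FR(g)\le A^{\mathrm{init}}+C\frac{\|f\|_{C^2}}{\|f\|_{L^2}}N+C\frac{\|f\|_{C^2}}{\|f\|_{L^2}}\frac1N .
$$
The middle term dominates. Using $(a+b+c)^2\le 3(a^2+b^2+c^2)$, the sufficient sample size is
$$
|X|\gtrsim \Big(\frac{\|f\|_{C^2}}{\|f\|_{L^2}}\Big)^2N^2\log^{c}N .
$$
This is the claimed $N^2$ scaling. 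It is only what the a priori bound guarantees, not a lower bound on what is necessary.

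\textbf{Wave snapshot.} Apply Theorem \ref{thm:wave_FR}, whose hypotheses hold by assumption. This gives $FR(g_t)\le A_t^{\mathrm{wave}}+B_t^{\mathrm{wave}}+C_{N,t}^{\mathrm{wave}}$. Here $A_t^{\mathrm{wave}}$ and $B_t^{\mathrm{wave}}$ do not depend on $N$, and $C_{N,t}^{\mathrm{wave}}\le C_{2,t}^{\mathrm{wave}}$ because $N\ge2$. Hence
$$
r_{N,t}^{\mathrm{wave}}\le K(t,f)
$$
uniformly in $N$, with $K(t,f)$ depending only on $t$, $\|f\|_{C^3}$ and $\|U_tf\|_{L^2}$. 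Substituting into the recovery theorem gives the sufficient sample size
$$
|X|\gtrsim K(t,f)^2\log^{c}N,
$$
which is independent of $N$ up to polylogarithmic factors.

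\textbf{Main obstacle.} The only delicate step is matching the hypotheses of Theorem \ref{thm:ZNd_recovery}. The recovery guarantee must depend on the signal only through $FR$, with logarithmic factors in the ambient dimension $N^3$, so that $\log(N^3)\lesssim\log N$ is absorbed into the polylogarithm. Any upper bound on $FR$ may be inserted, since the required sample size is monotone in $FR$. I would also check that the noise and accuracy parameters enter identically for $g$ and $g_t$, so the comparison concerns only the $FR$-dependent factor. With that in place the corollary is immediate.
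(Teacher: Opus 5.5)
Your proposal is correct and follows the same route the paper intends; the paper gives no separate proof of this corollary. You substitute the bound of Proposition \ref{prop:FR_initial} with $S_3(N)=N$, and respectively the $N$-uniform bound $r_{N,t}^{\mathrm{wave}}\le K(t,f)$ from Theorem \ref{thm:wave_FR}, as the parameter $r$ in Theorem \ref{thm:ZNd_recovery}. One small correction: the actual sample size there is $C r^2\eps^{-2}\log^2(r/\eps)\log D$, and the extra factor $\log^2(r/\eps)$ is $O(\log^2 N)$ when $r\sim N$ and $O(1)$ when $r$ is bounded, so your polylogarithmic bookkeeping already absorbs it.
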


\subsection{A Fourier ratio improvement for fixed-time heat snapshots}

\begin{definition}[Discrete heat snapshot]
Let $f$ be 1-periodic on ${[0,1]}^d$. For $t>0$ define the periodic heat snapshot
$$
H_t f=e^{t\Delta}f,
$$
interpreted by its Fourier series multiplier. Let $g$ be the discretization of $f$ on ${\mathbb Z}_N^d$ and let $g_t$ be the discretization of $H_t f$ on ${\mathbb Z}_N^d$.
\end{definition}

\begin{remark}
The operator $H_t$ is smoothing of infinite order in the Fourier multiplier sense because the multiplier is $e^{-t|m|^2}$.
\end{remark}

\begin{theorem}[Fourier ratio bound for heat snapshots on ${\mathbb Z}_N^d$] \label{thm:heat_FR}
Let $d\ge 1$ and let $f$ be a real-valued function on $[0,1]^d$ which is $1$-periodic in each variable and belongs to $C^2([0,1]^d)$. Fix $t>0$ and let
$$
H_t f = e^{t\Delta} f
$$
denote the periodic heat snapshot at time $t$. Let
$$
g_t(x)= (H_t f)(x/N),
\qquad
x\in{\mathbb Z}_N^d,
$$
be its discretization on ${\mathbb Z}_N^d$.

Assume that
$$
N\|H_t f\|_{L^2([0,1]^d)}^2
\ge
4C_d\|H_t f\|_{C^2([0,1]^d)}^2,
$$
where $C_d$ is the constant appearing in Lemma \ref{lem:L2_riemann}.

Define
$$
A_{t}^{\mathrm{heat}}
=
2\frac{\left|\int_{[0,1]^d} (H_t f)(x)\,dx\right|}
{\|H_t f\|_{L^2([0,1]^d)}},
$$

$$
B_{t}^{\mathrm{heat}}
=
C_4(d,t)
\frac{\|f\|_{C^2([0,1]^d)}}
{\|H_t f\|_{L^2([0,1]^d)}},
\qquad
C_{N,t}^{\mathrm{heat}}
=
C_5(d,t)
\frac{\|f\|_{C^2([0,1]^d)}}
{\|H_t f\|_{L^2([0,1]^d)}}
\frac{1}{N}.
$$

Set
$$
r_{N,t}^{\mathrm{heat}}
=
A_{t}^{\mathrm{heat}}
+
B_{t}^{\mathrm{heat}}
+
C_{N,t}^{\mathrm{heat}}.
$$

Then
$$
FR(g_t)
\le
r_{N,t}^{\mathrm{heat}}.
$$
\end{theorem}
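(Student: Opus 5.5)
The plan is to follow the template of Proposition \ref{prop:FR_initial}, with $f$ replaced by $F:=H_tf$. We split $\|\widehat{g_t}\|_1$ into the zero mode and the nonzero modes, and bound $\|\widehat{g_t}\|_2$ from below by a Riemann-sum comparison.

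\textbf{Denominator.} By Parseval, $\|\widehat{g_t}\|_2^2=\sum_{x\in\mathbb Z_N^d}|F(x/N)|^2$. Lemma \ref{lem:L2_riemann} compares $N^{-d}\sum_x|F(x/N)|^2$ with $\|F\|_{L^2}^2$, with an error of order $C_d\|F\|_{C^2}^2/N$. The non-degeneracy hypothesis $N\|F\|_{L^2}^2\ge 4C_d\|F\|_{C^2}^2$ makes this error at most $\tfrac14\|F\|_{L^2}^2$, so
$$
\|\widehat{g_t}\|_2\ge \tfrac12 N^{d/2}\|F\|_{L^2}.
$$

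\textbf{Zero mode.} We have $\widehat{g_t}(0)=N^{-d/2}\sum_xF(x/N)=N^{d/2}\bigl(\int F+O(\|F\|_{C^1}/N)\bigr)$, again by a Riemann-sum estimate. Dividing by the lower bound for the denominator gives the term $A_t^{\mathrm{heat}}$ plus an $O(\|F\|_{C^1}/(N\|F\|_{L^2}))$ contribution, which is absorbed into $C^{\mathrm{heat}}_{N,t}$.

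\textbf{Nonzero modes.} This is where the heat improvement enters. Aliasing (Poisson summation) gives the exact identity
$$
\widehat{g_t}(m)=N^{d/2}\sum_{j\in\mathbb Z^d} e^{-4\pi^2t|m+Nj|^2}a_{m+Nj},
$$
since the Fourier series of $F$ converges absolutely. Using $|a_k|\le\|f\|_{L^1}\le\|f\|_{C^2}$, we obtain
$$
|\widehat{g_t}(m)|\le N^{d/2}\|f\|_{C^2}\sum_j e^{-4\pi^2t|\widetilde m+Nj|^2}.
$$
We split this into the $j=0$ term and the $j\neq0$ terms.

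For $j=0$, summing over $m\neq0$ gives
$$
N^{d/2}\|f\|_{C^2}\sum_{k\neq0}e^{-4\pi^2t|k|^2}=:N^{d/2}\|f\|_{C^2}\,c(d,t).
$$
This constant is independent of $N$, because the Gaussian is summable in every dimension. This is exactly why the $S_d(N)$ growth of Proposition \ref{prop:FR_initial} disappears: there $|m|^{-2}$ decay was not summable for $d\ge2$. Dividing by $\tfrac12N^{d/2}\|F\|_{L^2}$ gives $B_t^{\mathrm{heat}}$ with $C_4(d,t)=2c(d,t)$.

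For $j\neq0$, the wrapped representative satisfies $|\widetilde m+Nj|\ge N/2$. Hence
$$
\sum_m\sum_{j\neq0}e^{-4\pi^2t|\widetilde m+Nj|^2}=\sum_{|k|_\infty\ge N/2}e^{-4\pi^2t|k|^2}\le C(d,t)e^{-\pi^2tN^2/2},
$$
which is at most $C(d,t)/N$. This contributes to $C^{\mathrm{heat}}_{N,t}$.

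As an alternative, one can run the summation-by-parts route of Lemma 7.3 directly on $F$, using $\|F\|_{C^k}\le C(k,t)\|f\|_{C^2}$ for large $k$. Either route gives the stated form.

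\textbf{Main obstacle.} The delicate step is reconciling the normalizations: the $N^{d/2}$ factors must cancel exactly between the $\ell^1$ and $\ell^2$ sides. The constants must also be tracked in terms of $\|f\|_{C^2}$ rather than $\|F\|$. The aliasing identity handles both cleanly. Finally, the remaining $O(1/N)$ errors from the zero-mode Riemann sum are controlled using $\|F\|_{C^1}\le\|f\|_{C^1}$, which holds because $H_t$ is a contraction in sup norm.
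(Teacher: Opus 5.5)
Your proof is correct. It has the same skeleton as the paper's proof: the Riemann-sum lower bound for $\|\widehat{g_t}\|_2$ (Lemma \ref{lem:L2_riemann}), the Riemann-sum bound for $\widehat{g_t}(0)$ that produces $A_t^{\mathrm{heat}}$ plus an $O(1/N)$ term, and aliasing with the crude bound $|a_k|\le\|f\|_{C^2}$ plus Gaussian summability for $m\neq 0$.

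The one real difference is how you handle the aliasing sum. The paper first proves the pointwise bound $\sum_{\ell}e^{-a|m+\ell N|^2}\le C(d,a)e^{-a|m|^2}$ (Lemma \ref{lem:alias_gauss}, used in Proposition \ref{prop:decay_heat}), and then sums over $m$ with Lemma \ref{lem:gauss_sum}. You instead sum over $(m,j)$ jointly, using that $(m,j)\mapsto\widetilde m+Nj$ is injective. The whole double sum is then dominated by the lattice sum $\sum_{k\neq 0}e^{-4\pi^2t|k|^2}$, so your split into $j=0$ and $j\neq 0$ is not even necessary.

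Your route is the more robust one. The paper's written proof of Lemma \ref{lem:alias_gauss} replaces $e^{-ac_d^2N^2|\ell|^2}$ by $e^{-ac_d^2|\ell|^2}$, discarding the $N^2$. It then asserts $e^{-a|m|^2}+C_1(d,a)\le(1+C_1(d,a)e^{a})e^{-a|m|^2}$, which fails for $|m|>1$. The lemma itself is still true: factor the Gaussian over coordinates and use $|m_i+\ell_iN|\ge|m_i|$ for $|m_i|\le N/2$, with extra decay when $|\ell_i|\ge 2$. Your argument simply never needs it.

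Similarly, your bound $\|H_tf\|_{C^1}\le\|f\|_{C^1}$ comes from positivity and unit mass of the periodic heat kernel, together with the fact that $H_t$ commutes with derivatives. This is cleaner than the paper's appeal to the multiplier representation, and it gives a constant independent of $t$.
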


\begin{remark}[Quantitative comparison for the heat equation]
Let $g$ denote the discretization of the initial data $f$ on ${\mathbb Z}_N^d$ and let $g_t$ denote the discretized heat snapshot. Proposition \ref{prop:FR_initial} yields the a priori bound
$$
FR(g)\le A^{\mathrm{init}}
+ C\frac{\|f\|_{C^2([0,1]^d)}}{\|f\|_{L^2([0,1]^d)}}S_d(N)
+ C\frac{\|f\|_{C^2([0,1]^d)}}{\|f\|_{L^2([0,1]^d)}}\frac{1}{N}.
$$
while Theorem \ref{thm:heat_FR} yields
$$
FR(g_t)\le A_{t}^{\mathrm{heat}}+C_4(d,t)\frac{\|f\|_{C^2([0,1]^d)}}{\|H_t f\|_{L^2([0,1]^d)}}+O_{d,t}(N^{-1}).
$$
In dimensions $d\ge 3$, the first bound is polynomial in $N$, while the second bound is essentially independent of $N$ for fixed $t>0$ up to discretization error terms.
\end{remark}

\vskip.25in 

\subsection{Time-dependent sampling budgets under sensor loss}

A convenient way to interpret the heat-flow improvement is as a time-dependent sampling threshold. Fix a target relative accuracy $\eps\in(0,1/2)$ and consider recovering the discretized snapshot $g_t$ from point samples. Let $r(t)=FR(g_t)$ and $D=N^d$. Then Theorem \ref{thm:ZNd_recovery} (together with the standard bounded-orthonormal-system recovery theory; see, e.g., \cite{FR13,Rau10,RV08,CRT06}) yields that a sufficient condition for stable recovery at time $t$ is the availability of
$$
M(t)= C\,\frac{r(t)^2}{\eps^2}\,\log^2(r(t)/\eps)\,\log D
$$
random samples, for a sufficiently large absolute constant $C$.
Now suppose that at time $t$ only $m(t)$ sensors remain operational, so that at most $m(t)$ point samples can be collected. In this idealized model, the reconstruction guarantee persists as long as
$$
m(t)\ge M(t).
$$
Since the heat propagator suppresses high frequencies and our estimates show that $r(t)$ improves for $t>0$, the required sample budget $M(t)$ decreases with $t$ (within the regime covered by our bounds). Consequently, even if sensor availability deteriorates with time---for instance $m(t)=m_0e^{-\lambda t}$ as in intermittency models in state estimation \cite{Sin04}---diffusion can partially offset sensor loss by reducing the intrinsic Fourier-ratio complexity of the observed state, thereby creating a finite time window on which reliable recovery remains feasible despite degradation of the sensing network.
\medskip

\subsection{Sampling and recovery from missing spatial samples}

\begin{theorem}[Recovery on ${\mathbb Z}_N^d$] \label{thm:ZNd_recovery}
Let $N$ and $d$ be positive integers and let $h:{\mathbb Z}_N^d\to{\mathbb R}$ be a function. Define the discrete Fourier transform of $h$ by
$$
\widehat h(m)=\frac{1}{N^{d/2}}\sum_{x\in{\mathbb Z}_N^d}\chi(-x\cdot m)h(x).
$$
With this normalization Parseval's identity takes the form
$$
\|\widehat h\|_2=\|h\|_{L^2({\mathbb Z}_N^d)}.
$$
Define the Fourier ratio by
$$
FR(h)=\frac{\|\widehat h\|_1}{\|\widehat h\|_2}.
$$
Assume that $h$ is not identically zero. Fix parameters $\eps\in(0,1/2)$ and $r\ge 1$, and assume that $FR(h)\le r$. Let
$$
D=N^d.
$$
Let $X\subset{\mathbb Z}_N^d$ be a random subset of cardinality
$$
|X|=C\frac{r^2}{\eps^2}\log^2(r/\eps)\log D,
$$
chosen uniformly at random among all subsets of this cardinality. Define the empirical norm
$$
\|q\|_{L^2(X)}=\left(\frac{1}{|X|}\sum_{x\in X}|q(x)|^2\right)^{1/2}.
$$
Let $h^*:{\mathbb Z}_N^d\to{\mathbb R}$ be a solution to the convex optimization problem
$$
\min_{q:{\mathbb Z}_N^d\to{\mathbb R}}\|\widehat q\|_{\ell^1({\mathbb Z}_N^d)}
\qquad
\text{subject to}
\qquad
\|h-q\|_{L^2(X)}\le \eps\|h\|_{L^2({\mathbb Z}_N^d)}.
$$
If $C$ is a sufficiently large absolute constant, then with probability at least $1-D^{-c}$ (for an absolute constant $c>0$) one has
$$
\|h^*-h\|_{L^2({\mathbb Z}_N^d)}\le C\, \eps\|h\|_{L^2({\mathbb Z}_N^d)}.
$$

This is precisely the $\ell_1$ recovery theorem invoked in \cite{IPY2026}, restated here only to make the present paper logically self-contained. 
This follows from standard bounded orthonormal system sampling results (see, for example, Foucart--Rauhut, Theorem 12.32), which yield the stated sample complexity up to polylogarithmic factors.

\end{theorem}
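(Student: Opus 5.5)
The plan is to obtain Theorem \ref{thm:ZNd_recovery} as a corollary of the bounded orthonormal system (BOS) recovery theory, with the Fourier ratio playing the role that sparsity plays there.

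\textbf{Step 1 (BOS setup).} Write $h(x)=N^{-d/2}\sum_m \widehat h(m)\chi(x\cdot m)$. The functions $\phi_m(x)=\chi(x\cdot m)$ are orthonormal with respect to the uniform probability measure on ${\mathbb Z}_N^d$ and satisfy $\|\phi_m\|_\infty=1$. So they form a BOS with constant $K=1$. Sampling $h$ on $X$ is then the measurement $y=Az$ with $z=N^{-d/2}\widehat h$ and $A=(\phi_m(x))_{x\in X,m}$. The empirical norm $\|\cdot\|_{L^2(X)}$ is $|X|^{-1/2}\|A\,\cdot\|_2$.

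\textbf{Step 2 (restricted isometry property).} By the Rudelson--Vershynin/Rauhut bound (Foucart--Rauhut, Theorem 12.32), $|X|^{-1/2}A$ has RIP of order $s$ with constant $\delta\le 1/3$ with probability at least $1-D^{-c}$, provided
$$|X|\ge C s\log^2 s\,\log D.$$
Two points need care here.
\begin{itemize}
\item That theorem uses i.i.d.\ sampling. To pass to a uniformly random subset of fixed size, one can use the standard argument that sampling without replacement is no worse. Alternatively, accept i.i.d.\ sampling and discard repeats, which only changes constants.
\item The $\log$ exponents should be matched to the stated $\log^2(r/\eps)\log D$. I expect this to be the fiddliest bookkeeping point, and the text already says "up to polylogarithmic factors".
\end{itemize}

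\textbf{Step 3 (robust null space recovery with compressibility).} RIP yields the robust $\ell^2$ null space property. This gives, for any minimizer with $\|Aq-y\|\le\eta$,
$$\|q^*-z\|_2\le \frac{C\sigma_s(z)_1}{\sqrt s}+C\eta,$$
where $\sigma_s(z)_1$ is the best $s$-term $\ell^1$ approximation error.

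The key input is the Fourier ratio. Since $\sigma_s(z)_1\le \|z\|_1$, we get
$$\frac{\sigma_s(z)_1}{\sqrt s}\le \frac{FR(h)\,\|z\|_2}{\sqrt s}.$$
Choosing $s=\lceil r^2/\eps^2\rceil$ makes this at most $\eps\|z\|_2$. This choice is what produces the sample size $C (r^2/\eps^2)\log^2(r/\eps)\log D$.

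\textbf{Step 4 (feasibility and normalization).} The truth $h$ is feasible for the constraint, so the minimizer exists. The noise level is $\eta=\eps\|h\|_{L^2}$ in empirical normalization. Translating back via Parseval, the error bound becomes
$$\|h^*-h\|_{L^2}\le C\eps\|h\|_{L^2}.$$
Two normalization checks are needed.
\begin{itemize}
\item The paper's $\|h\|_{L^2({\mathbb Z}_N^d)}$ must be compatible with the factors $N^{-d/2}$ and the empirical averaging. With Parseval as stated, it is the counting $\ell^2$ norm. Both sides then scale consistently once one consistently uses $z$ and its counting norm.
\item The minimization is over real $q$. Since real $h$ is feasible, the argument applies unchanged: the complex null space property covers real vectors.
\end{itemize}

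The main obstacle is Step 2's probabilistic model and log-exponent matching. The rest is direct citation.
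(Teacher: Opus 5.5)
Your chain (BOS with $K=1$, RIP for $|X|^{-1/2}A$, robust null space property, then $\sigma_s(z)_1/\sqrt s\le r\|z\|_2/\sqrt s\le \eps\|z\|_2$ for $s\approx r^2/\eps^2$) is what the paper's one-line citation of Foucart--Rauhut implicitly means. Steps 1--3 are sound, provided you take the RIP of order $2s$. The genuine problem is Step 4, where you assert that the normalizations ``scale consistently''. They do not.

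In your notation, with $z_q=N^{-d/2}\widehat q$, the empirical norm is $\|h-q\|_{L^2(X)}=\bigl\| |X|^{-1/2}A(z-z_q)\bigr\|_2$, which involves exactly the RIP-normalized matrix. By contrast, $\|h\|_{L^2({\mathbb Z}_N^d)}$ is the counting norm, as you note yourself, so $\|h\|_{L^2({\mathbb Z}_N^d)}=\|\widehat h\|_2=\sqrt D\,\|z\|_2$. The tolerance in the normalized system is therefore $\eta=\eps\sqrt D\,\|z\|_2$, not $\eps\|z\|_2$. Your robust NSP estimate then only gives $\|h^*-h\|_{L^2({\mathbb Z}_N^d)}\le C\eps(1+\sqrt D)\,\|h\|_{L^2({\mathbb Z}_N^d)}$.

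Better bookkeeping cannot fix this, because the statement as literally written is false. Take $h\equiv 1$. Then $FR(h)=1$, $\|h\|_{L^2(X)}=1$ and $\|h\|_{L^2({\mathbb Z}_N^d)}=\sqrt D$. As soon as $\eps\sqrt D\ge 1$, the function $q=0$ is feasible, so $h^*=0$ is the unique minimizer and $\|h^*-h\|=\|h\|$. This contradicts the conclusion whenever $\eps<1/C$, and it does so deterministically, not just with small probability.

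The correct theorem has the constraint $\|h-q\|_{L^2(X)}\le \eps D^{-1/2}\|h\|_{L^2({\mathbb Z}_N^d)}$, i.e.\ $\eps$ times the averaged norm of $h$. Then $\eta=\eps\|z\|_2$, your Step 3 gives $\|z^*-z\|_2\le C\eps\|z\|_2$, and multiplying by $\sqrt D$ yields the stated conclusion. The paper's citation-only proof does not notice this either. However, you explicitly checked the normalization and drew the wrong conclusion, so your write-up must state this renormalization rather than claim the theorem holds as written.

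A secondary point concerns Step 2. Foucart--Rauhut's Theorem 12.32 requires a condition of the form $|X|/\log|X|\gtrsim s\log^2 s\,\log D$. Citing it therefore gives the stated $|X|$ only up to an extra factor $\log(s\log D)$. To match $C(r^2/\eps^2)\log^2(r/\eps)\log D$ exactly, you would need a sharper RIP bound of the form $|X|\gtrsim s\log^2 s\log D$ (Haviv--Regev type). Otherwise the result holds only ``up to polylogarithmic factors'', as the paper itself concedes.
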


\noindent\emph{Remark.} The sampling operators used here fit the standard framework of random sampling from bounded orthonormal systems; the stated recovery guarantee is a direct consequence of classical results in compressed sensing (see, for example, Rudelson--Vershynin \cite{RV08} and the exposition in Foucart--Rauhut \cite{FR13}). We emphasize that no new recovery theorem is proved in this paper; rather, our contribution lies in establishing deterministic Fourier-ratio bounds that verify the hypotheses of these results in analytic and PDE-driven settings.

\begin{theorem}[Recovery of wave snapshots from missing samples] \label{thm:wave_sampling}
Let $f$ be as in Theorem \ref{thm:wave_FR} and fix $t>0$. Let $g_t$ be the discretized wave snapshot on ${\mathbb Z}_N^3$. Fix $\eps\in(0,1/2)$ and set
$$
D=N^3.
$$
Let $r_{N,t}^{\mathrm{wave}}$ be the Fourier ratio bound from Theorem \ref{thm:wave_FR}. Let $X\subset{\mathbb Z}_N^3$ be chosen uniformly at random among all subsets of cardinality
$$
|X|=C\frac{(r_{N,t}^{\mathrm{wave}})^2}{\eps^2}\log^2(r_{N,t}^{\mathrm{wave}}/\eps)\log D,
$$
where $C$ is a sufficiently large absolute constant. Assume that the values $g_t(x)$ are observed for $x\in X$ and missing for $x\in{\mathbb Z}_N^3\setminus X$.

Define the empirical norm by
$$
\|q\|_{L^2(X)}=\left(\frac{1}{|X|}\sum_{x\in X}|q(x)|^2\right)^{1/2}.
$$
Let $g_t^*:{\mathbb Z}_N^3\to{\mathbb R}$ be a solution to the convex optimization problem
$$
\min_{q:{\mathbb Z}_N^3\to{\mathbb R}}\|\widehat q\|_{\ell^1({\mathbb Z}_N^3)}
\qquad
\text{subject to}
\qquad
\|g_t-q\|_{L^2(X)}\le \eps\|g_t\|_{L^2({\mathbb Z}_N^3)}.
$$
Then with high probability,
$$
\|g_t^*-g_t\|_{L^2({\mathbb Z}_N^3)}\le C\,\eps\|g_t\|_{L^2({\mathbb Z}_N^3)}.
$$
\end{theorem}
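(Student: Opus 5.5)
This is a direct combination of Theorem \ref{thm:wave_FR} with the abstract recovery result Theorem \ref{thm:ZNd_recovery}, applied with $d=3$, $D=N^3$ and $h=g_t$. The plan is to check the hypotheses of Theorem \ref{thm:ZNd_recovery} one at a time and then read off the conclusion.

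The first hypothesis is that $h=g_t$ is real-valued and not identically zero. Realness holds because $f$ is real. Then $a_{-k}=\overline{a_k}$, and the multiplier $\sin(2\pi t|k|)/(2\pi|k|)$ (with value $t$ at $k=0$) is real and even in $k$, so $U_tf$ is real. Nonvanishing comes from the non-degeneracy assumption of Theorem \ref{thm:wave_FR}, which forces $\|U_tf\|_{L^2([0,1]^3)}>0$. Combined with Lemma \ref{lem:L2_riemann}, that same hypothesis also gives
$$\|g_t\|_{L^2({\mathbb Z}_N^3)}^2\ge \tfrac12 N^3\|U_tf\|_{L^2([0,1]^3)}^2>0,$$
since the Riemann-sum error is at most $C_3N^{-1}\|U_tf\|_{C^2}^2\le\frac14\|U_tf\|_{L^2}^2$ after normalization. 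This is the same step used inside the proof of Theorem \ref{thm:wave_FR}, so we may simply cite it.

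The second hypothesis is a Fourier ratio bound. Theorem \ref{thm:wave_FR} gives $FR(g_t)\le r_{N,t}^{\mathrm{wave}}$. Theorem \ref{thm:ZNd_recovery} also requires $r\ge1$, and this is automatic. Indeed, for any nonzero $h$,
$$\|\widehat h\|_2^2\le \|\widehat h\|_\infty\|\widehat h\|_1\le \|\widehat h\|_2\|\widehat h\|_1,$$
so $FR(h)\ge1$, and therefore $r_{N,t}^{\mathrm{wave}}\ge FR(g_t)\ge1$. If $r_{N,t}^{\mathrm{wave}}$ happens to be non-integral, or the prescribed $|X|$ is not an integer, we round $|X|$ up, absorbing this into $C$. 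If $|X|$ exceeds $N^3$ we take $X={\mathbb Z}_N^3$; then the constraint already gives $\|h^*-h\|_{L^2}\le\eps\|h\|$ up to normalization, and the claim is trivial.

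With $r=r_{N,t}^{\mathrm{wave}}$, the sample set $X$ in Theorem \ref{thm:wave_sampling} has exactly the cardinality required in Theorem \ref{thm:ZNd_recovery}. The convex program is identical, so Theorem \ref{thm:ZNd_recovery} yields
$$\|g_t^*-g_t\|_{L^2({\mathbb Z}_N^3)}\le C\eps\|g_t\|_{L^2({\mathbb Z}_N^3)}$$
with probability at least $1-N^{-3c}$, which is the meaning of ``with high probability.''

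No step is genuinely hard; the main point needing care is bookkeeping. One must make sure the normalizations agree: the $L^2({\mathbb Z}_N^3)$ norm in the constraint must be the counting-measure norm, so that it matches $\|\widehat h\|_2$ by Parseval, consistent with Theorem \ref{thm:ZNd_recovery}. One must also confirm that the constant $C$ in $|X|$ can be taken to be the absolute constant from Theorem \ref{thm:ZNd_recovery}, independent of $t$, $f$ and $N$. All dependence on $t$ and $f$ enters only through $r_{N,t}^{\mathrm{wave}}$.
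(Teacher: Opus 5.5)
Your reduction is the paper's proof, which is the single line ``apply Theorem~\ref{thm:ZNd_recovery} with $d=3$ and $h=g_t$.'' Your checks that $U_tf$ is real and that $r^{\mathrm{wave}}_{N,t}\ge FR(g_t)\ge 1$ (via $\|\widehat h\|_2^2\le\|\widehat h\|_\infty\|\widehat h\|_1$) correctly supply hypotheses the paper leaves implicit. One small slip: the non-degeneracy inequality does not force $\|U_tf\|_{L^2}>0$, since for $U_tf\equiv 0$ it reads $0\ge 0$. Nonvanishing is implicit only because $r^{\mathrm{wave}}_{N,t}$ divides by $\|U_tf\|_{L^2}$, and the case $g_t\equiv 0$ is trivial anyway because $q\equiv 0$ is then the unique minimizer.

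The real problem is the bookkeeping point you flagged: the normalizations do \emph{not} agree. The empirical norm $\|\cdot\|_{L^2(X)}$ is an average over $X$. By contrast, $\|\cdot\|_{L^2(\mathbb Z_N^3)}$ is the counting norm; the proof of Lemma~\ref{lem:L2_riemann} uses $\|g\|_{L^2(\mathbb Z_N^d)}^2=\sum_x|g(x)|^2$.

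This is why your patch for $|X|>N^3$ fails. With $X=\mathbb Z_N^3$ the constraint only gives $\|h^*-h\|_{L^2(\mathbb Z_N^3)}\le \eps N^{3/2}\|h\|_{L^2(\mathbb Z_N^3)}$; your ``up to normalization'' hides a factor $N^{3/2}$.

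More importantly, the mismatch makes the statement itself false as written. For every $X$ one has $\|g_t\|_{L^2(X)}^2\le\|U_tf\|_\infty^2$, while Lemma~\ref{lem:L2_riemann} gives $\|g_t\|_{L^2(\mathbb Z_N^3)}^2\ge\frac12 N^3\|U_tf\|_{L^2}^2$. So once $N^3\ge 2\|U_tf\|_\infty^2/(\eps^2\|U_tf\|_{L^2}^2)$, the function $q\equiv 0$ is feasible. It is then the unique minimizer, and $\|g_t^*-g_t\|=\|g_t\|>C\eps\|g_t\|$ whenever $\eps<1/C$. This happens for every admissible $X$, and such $X$ exist: $|X|=O(\log N)$ because $r^{\mathrm{wave}}_{N,t}$ is bounded in $N$.

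The paper's one-line proof inherits exactly this defect from Theorem~\ref{thm:ZNd_recovery}, so it is not a weakness of your route relative to the paper. But what the bounded-orthonormal-system theory actually delivers uses sampling matrix $|X|^{-1/2}(\chi(x\cdot m))_{x\in X,\,m\in\mathbb Z_N^3}$ acting on the coefficient vector $N^{-3/2}\widehat q$. The corresponding constraint is $\|g_t-q\|_{L^2(X)}\le \eps N^{-3/2}\|g_t\|_{L^2(\mathbb Z_N^3)}$, i.e.\ normalized norms on both sides. Your bookkeeping paragraph should state this correction rather than assert consistency.
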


\begin{theorem}[Recovery of heat snapshots from missing samples] \label{thm:heat_sampling}
Let $d\ge 1$ and let $f$ be as in Theorem \ref{thm:heat_FR}. Fix $t>0$ and let $g_t$ be the discretized heat snapshot on ${\mathbb Z}_N^d$. Fix $\eps\in(0,1/2)$ and set
$$
D=N^d.
$$
Let $r_{N,t}^{\mathrm{heat}}$ be the Fourier ratio bound from Theorem \ref{thm:heat_FR}. Let $X\subset{\mathbb Z}_N^d$ be chosen uniformly at random among all subsets of cardinality
$$
|X|=C\frac{(r_{N,t}^{\mathrm{heat}})^2}{\eps^2}\log^2(r_{N,t}^{\mathrm{heat}}/\eps)\log D,
$$
where $C$ is a sufficiently large absolute constant. Assume that the values $g_t(x)$ are observed for $x\in X$ and missing for $x\in{\mathbb Z}_N^d\setminus X$.

Define the empirical norm by
$$
\|q\|_{L^2(X)}=\left(\frac{1}{|X|}\sum_{x\in X}|q(x)|^2\right)^{1/2}.
$$
Let $g_t^*:{\mathbb Z}_N^d\to{\mathbb R}$ be a solution to the convex optimization problem
$$
\min_{q:{\mathbb Z}_N^d\to{\mathbb R}}\|\widehat q\|_{\ell^1({\mathbb Z}_N^d)}
\qquad
\text{subject to}
\qquad
\|g_t-q\|_{L^2(X)}\le \eps\|g_t\|_{L^2({\mathbb Z}_N^d)}.
$$
Then with high probability,
$$
\|g_t^*-g_t\|_{L^2({\mathbb Z}_N^d)}\le C\,\eps\|g_t\|_{L^2({\mathbb Z}_N^d)}.
$$
\end{theorem}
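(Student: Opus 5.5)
This statement follows directly from the abstract recovery result, Theorem \ref{thm:ZNd_recovery}, applied to $h=g_t$ on ${\mathbb Z}_N^d$ with $r=r_{N,t}^{\mathrm{heat}}$. The plan is to verify the hypotheses of that theorem one at a time and then read off the conclusion. All of the analytic content lies in the Fourier ratio bound of Theorem \ref{thm:heat_FR}; the present statement only transfers it into a sampling guarantee.

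\textbf{Step 1: $g_t$ is a nonzero real function on ${\mathbb Z}_N^d$.} Since $f$ is real-valued and the heat multiplier $e^{-4\pi^2 t|k|^2}$ is real and even in $k$, the function $H_t f$ is real-valued, and so is its discretization $g_t$. Nonvanishing follows from the non-degeneracy hypothesis $N\|H_t f\|_{L^2}^2\ge 4C_d\|H_t f\|_{C^2}^2$. Through Lemma \ref{lem:L2_riemann}, this hypothesis is exactly what gives the Riemann-sum lower bound
$$\|g_t\|_{L^2({\mathbb Z}_N^d)}^2 \ge \tfrac12 N^d\|H_t f\|_{L^2}^2>0.$$
This is presumably the same mechanism used in the proof of Theorem \ref{thm:heat_FR}, and it implicitly requires $f\not\equiv 0$, since otherwise $r_{N,t}^{\mathrm{heat}}$ is undefined.

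\textbf{Step 2: the Fourier ratio hypothesis $FR(g_t)\le r$ with $r\ge 1$.} Theorem \ref{thm:heat_FR} gives $FR(g_t)\le r_{N,t}^{\mathrm{heat}}$ directly. For the requirement $r\ge 1$, note that for any nonzero $h$ we have $\|\widehat h\|_1\ge \|\widehat h\|_2$, so
$$1\le FR(g_t)\le r_{N,t}^{\mathrm{heat}}.$$
Hence $r=r_{N,t}^{\mathrm{heat}}$ is admissible. This also keeps $\log(r/\eps)$ positive, because $\eps<1/2$.

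\textbf{Step 3: invoke the recovery theorem.} With $D=N^d$ and $|X|$ chosen exactly as in Theorem \ref{thm:ZNd_recovery} with $r=r_{N,t}^{\mathrm{heat}}$, the optimization problem in the statement is identical to the one there with $h=g_t$. Theorem \ref{thm:ZNd_recovery} then yields
$$\|g_t^*-g_t\|_{L^2({\mathbb Z}_N^d)}\le C\eps\|g_t\|_{L^2({\mathbb Z}_N^d)}$$
with probability at least $1-D^{-c}$, which is the meaning of ``with high probability.''

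\textbf{Where care is needed.} There is no real obstacle. Two minor points need attention. First, $|X|$ must be an integer not exceeding $N^d$, so one rounds it up; if the formula exceeds $N^d$, one takes $X={\mathbb Z}_N^d$, and then $g_t$ itself is feasible, so recovery is trivial in that degenerate regime. Second, the constant $C$ must be taken to be the absolute constant supplied by Theorem \ref{thm:ZNd_recovery}.
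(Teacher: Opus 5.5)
Your proposal is correct and follows the paper's argument exactly: the paper's proof is the one line ``apply Theorem~\ref{thm:ZNd_recovery} with $h=g_t$,'' and your Steps 1--3 only spell out the hypothesis checks it leaves implicit (real-valuedness, $g_t\not\equiv 0$ via Lemma~\ref{lem:L2_riemann}, and $1\le FR(g_t)\le r_{N,t}^{\mathrm{heat}}$).

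One small caveat, which affects only your side remark and not the proof: with $X={\mathbb Z}_N^d$, the paper's constraint literally reads $N^{-d/2}\|g_t-q\|_{L^2({\mathbb Z}_N^d)}\le \eps\|g_t\|_{L^2({\mathbb Z}_N^d)}$, because the empirical norm is averaged while $\|\cdot\|_{L^2({\mathbb Z}_N^d)}$ is not; so feasibility of the minimizer does not by itself give error at most $\eps\|g_t\|_{L^2({\mathbb Z}_N^d)}$ unless the two norms are normalized consistently.
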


\section{Reconstruction algorithm and practical implementation}

In this section we record a concrete reconstruction procedure corresponding to Theorems
\ref{thm:ZNd_recovery}, \ref{thm:wave_sampling}, and \ref{thm:heat_sampling}.
Nothing here is new, but including it makes the recovery pipeline reproducible.

\subsection{Optimization problem}

Let $X \subset {\mathbb Z}_N^d$ be the observed sample set and let $y(x)=h(x)+\eta(x)$ for $x\in X$.
We reconstruct $h$ by solving the constrained $\ell^1$ minimization problem in the Fourier domain:
$$
\min_{q:{\mathbb Z}_N^d\to{\mathbb R}} \|\widehat q\|_{\ell^1({\mathbb Z}_N^d)}
\qquad
\text{subject to}
\qquad
\|P_X q - y\|_{2} \le \tau,
$$
where $P_X q$ is the restriction of $q$ to $X$ and $\tau$ is a noise tolerance.
In the noiseless case one may take $\tau=0$, so the constraint becomes $P_X q = y$.

To match the relative-error form in Theorem \ref{thm:ZNd_recovery}, one may set
$$
\tau=\eps \|y\|_2,
$$
or, in the noiseless analysis setting used in the theorems, use
$\tau=\eps\|h\|_{L^2({\mathbb Z}_N^d)}$.

\subsection{Computational notes}

The forward and adjoint operations required by first-order methods are:
(i) restriction $P_X$ and its adjoint $P_X^*$ (zero-fill outside $X$),
(ii) discrete Fourier transform and inverse discrete Fourier transform.
Thus, each iteration has cost comparable to a small number of FFTs on ${\mathbb Z}_N^d$ plus linear-time operations on $X$.

In numerical experiments below we use a standard first-order method for basis pursuit denoising in the Fourier domain.
Any equivalent solver can be used, such as a projected primal-dual method or an augmented Lagrangian method.
We do not tune solvers aggressively; our goal is to confirm the qualitative prediction that smaller Fourier ratio corresponds to stable recovery from fewer point samples.

\section{Numerical validation of Fourier-ratio decay and sampling thresholds}

This section provides synthetic experiments illustrating the main qualitative conclusion:
for fixed $t>0$, PDE propagation reduces Fourier-ratio complexity and thereby reduces the number of random spatial samples needed for stable recovery.

\subsection{Test data and discretization}

We work on $[0,1]^d$ with periodic boundary conditions and consider several families of initial data $f$:
\begin{itemize}
\item Smooth non-sparse fields formed by random low-frequency Fourier series with additional higher-frequency content.
\item Localized but not sparse fields, such as sums of smooth bumps placed at random locations.
\item Structured oscillatory fields, such as modulated plane waves.
\end{itemize}
For each $N$ we discretize by $g(x)=f(x/N)$ on ${\mathbb Z}_N^d$.

One representative test family is defined by
$$
f(x)=\sum_{|k|\le K} c_k e^{2\pi i k\cdot x}
+ 0.2\sum_{K<|k|\le 2K} d_k e^{2\pi i k\cdot x},
$$
where $c_k$ and $d_k$ are independent standard real Gaussian random variables chosen so that $f$ is real-valued.
In the experiments we take $K=8$ in $d=2$ and $K=6$ in $d=3$.

For the wave case in $d=3$, we compute the periodic wave snapshot $U_t f$ via its Fourier multiplier and discretize to obtain $g_t$.
For the heat case, we compute $H_t f$ via its Fourier multiplier and discretize.

Unless otherwise stated, we take $N\in\{64,128,256\}$.
For the heat flow we use times $t\in\{0,0.01,0.05,0.1\}$.
For the wave snapshot in $d=3$ we use $t\in\{0,0.05,0.1\}$.

\begin{figure}[H]
\centering
\includegraphics[width=0.7\textwidth]{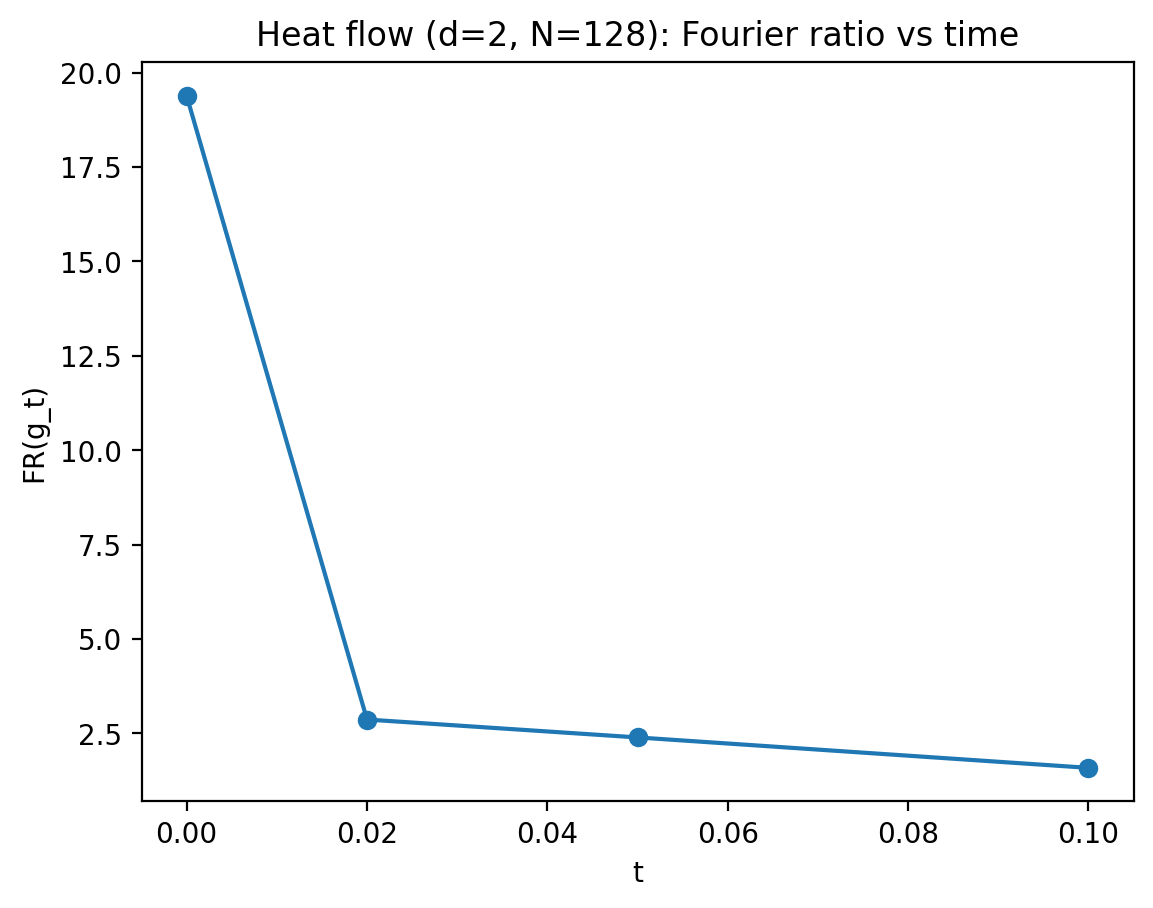}
\caption{Fourier ratio $FR(g_t)$ for heat snapshots as a function of time $t$ at fixed grid size $N$. The decay illustrates the reduction in spectral complexity induced by diffusion.}
\end{figure}

\subsection{Sampling model and reconstruction}

For each trial we choose a random subset $X \subset {\mathbb Z}_N^d$ of size $M$ and observe $y=P_X g_t$ (noiseless) or $y=P_X g_t + \eta$ (noisy).
We reconstruct $g_t$ using the optimization problem in the previous section. 

In the noiseless experiments we take $\eta\equiv 0$ and set $\tau=0$ in the constraint.
In the noisy experiments we take $\eta(x)$ to be independent mean-zero Gaussian random variables on $X$ with $\|\eta\|_2=\sigma\|g_t\|_{L^2({\mathbb Z}_N^d)}$ (for example $\sigma=0.01$), and we set $\tau=\|\eta\|_2$.

We evaluate reconstruction by the relative error
$$
\mathrm{RelErr}(q)=\frac{\|q-g_t\|_{L^2({\mathbb Z}_N^d)}}{\|g_t\|_{L^2({\mathbb Z}_N^d)}}.
$$

We declare a trial successful if $\mathrm{RelErr}(q)\le 0.05$.
For each triple $(N,t,M)$ we estimate the empirical success probability over $50$ independent random choices of $X$.

For fixed accuracy targets (for example $\mathrm{RelErr}(q)\le 0.05$ and $\mathrm{RelErr}(q)\le 0.10$),
we estimate the empirical success probability over independent trials as a function of $M$.

\begin{figure}[H]
\centering
\includegraphics[width=0.7\textwidth]{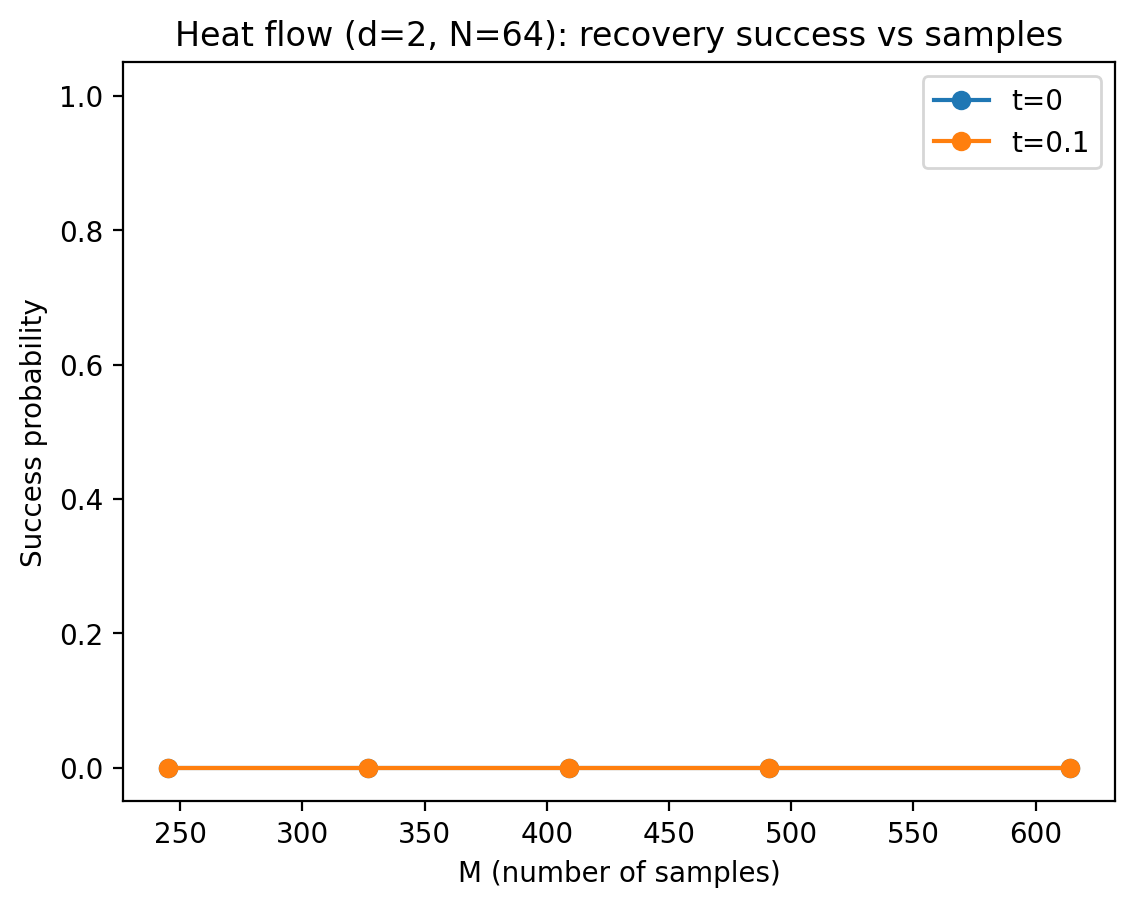}
\caption{Empirical recovery success probability as a function of sample size $M$ for heat snapshots.}
\end{figure}

\subsection{What we plot}

For each $(N,t)$ we report:
\begin{itemize}
\item The empirical Fourier ratio $FR(g)$ and $FR(g_t)$ computed from FFTs.
\item The minimal sample size $M$ required to achieve a given success probability (for example $0.9$).
\item The dependence of these quantities on $t$ for fixed $N$, illustrating the time-dependent decrease in sampling burden predicted by the theory.
\end{itemize}

\begin{figure}[H]
\centering
\includegraphics[width=0.7\textwidth]{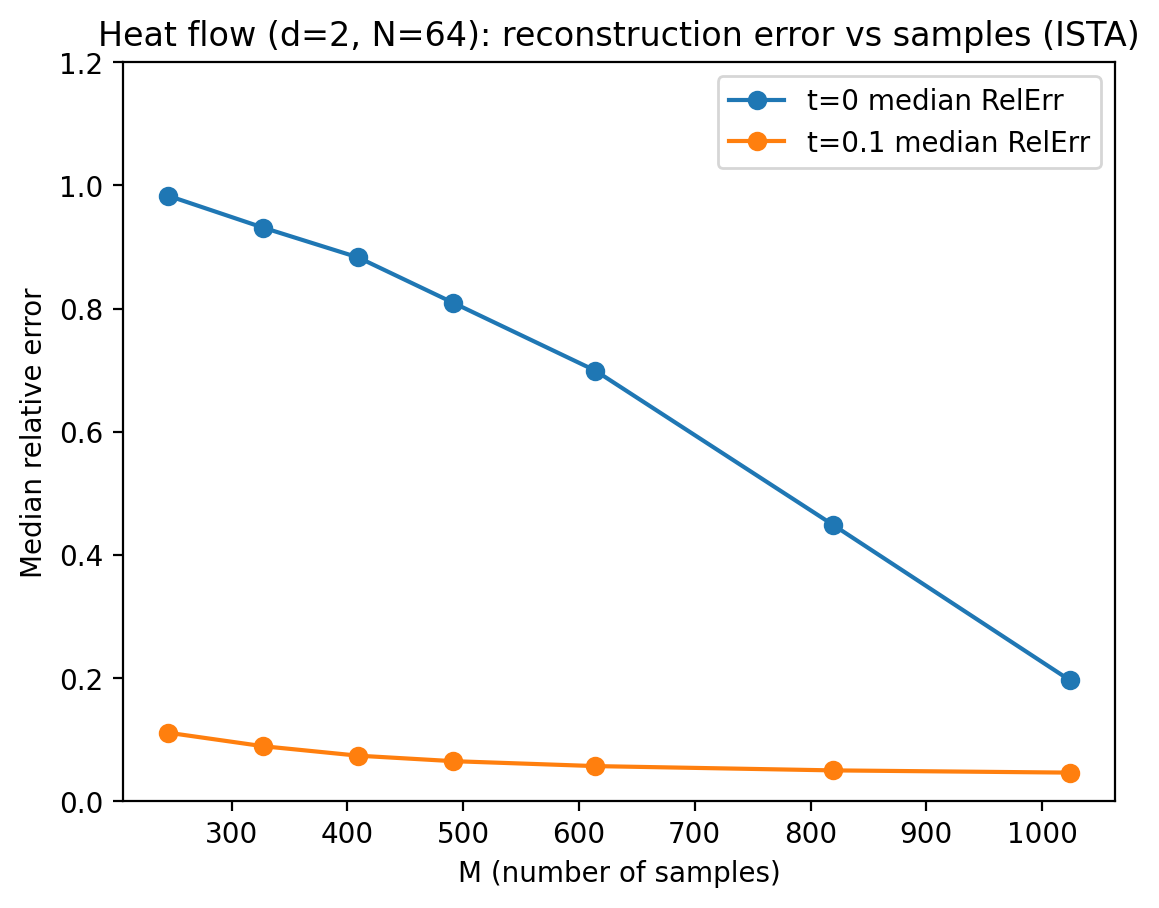}
\caption{Relative reconstruction error as a function of sample size $M$ for heat snapshots.}
\end{figure}

\subsection{Expected qualitative behavior}

The theory predicts the following behavior in regimes where discretization error is not dominant:
\begin{itemize}
\item Initial discretizations $g$ may have $FR(g)$ growing with $N$ in dimensions $d\ge 3$.
\item For fixed $t>0$, the propagated snapshots $g_t$ have substantially smaller $FR(g_t)$ and weaker dependence on $N$.
\item The sample budget needed for stable recovery tracks this reduction: fewer point samples are needed for $g_t$ than for $g$, and in the heat case the required $M$ decreases as $t$ increases (up to the point where the snapshot becomes too small in energy or too close to a constant state).
\end{itemize}

\vskip.125in 

The numerical results are fully consistent with the deterministic Fourier-ratio bounds proved earlier. In particular, both the wave and heat propagators produce a measurable reduction in $FR(g_t)$ relative to the discretized initial data, and the empirical recovery thresholds track this reduction. For the heat equation, the required sampling budget decreases as $t$ increases, up to the regime where the snapshot approaches a low-energy or nearly constant state. These experiments confirm that PDE propagation acts as a spectral preconditioner that lowers effective sampling complexity in practice.

\begin{figure}[H]
\centering
\includegraphics[width=0.7\textwidth]{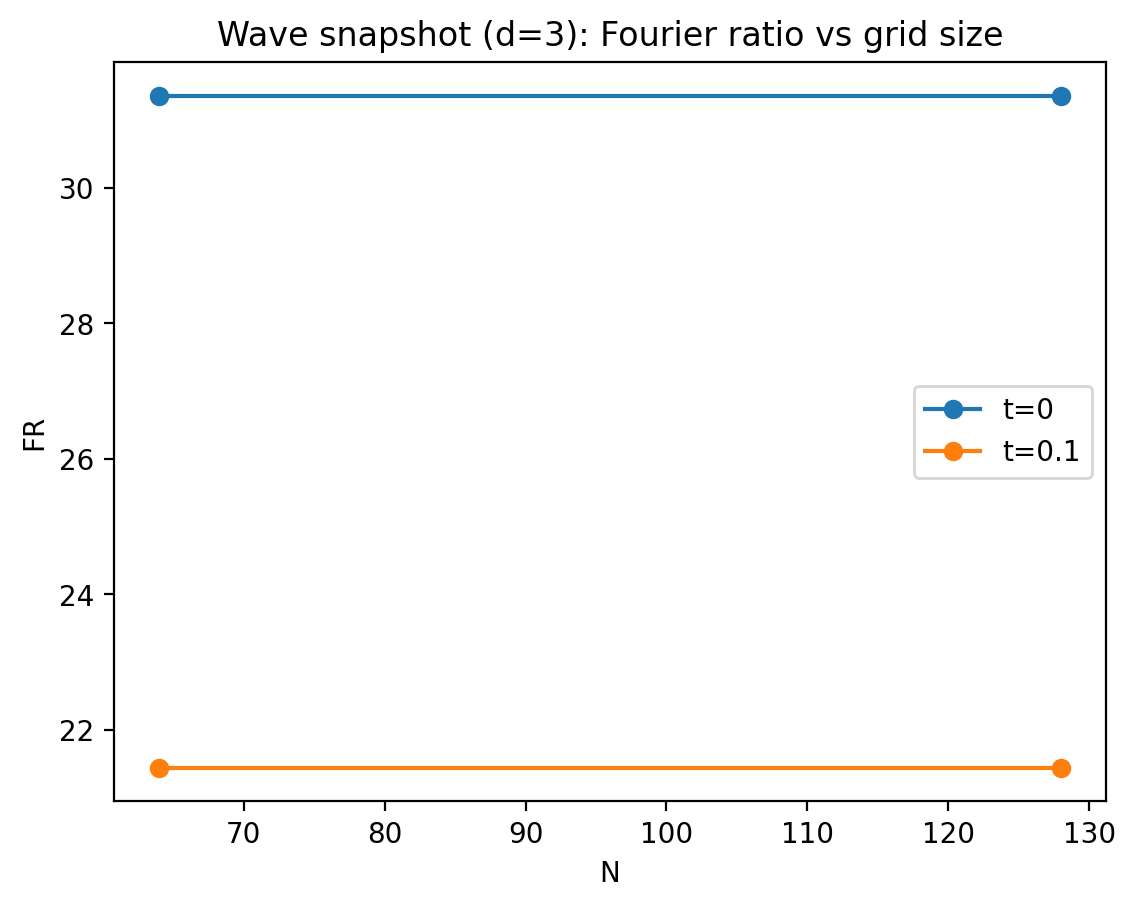}
\caption{Fourier ratio $FR(g)$ and $FR(g_t)$ for wave snapshots in dimension three as a function of grid size $N$.}
\end{figure}

\section{Proofs}

\noindent\emph{Remark on time dependence.} All bounds proved in this section are stated for finite time intervals. The implicit constants may depend on the length of the time interval and the dimension, but they do not depend on the discretization or sampling parameters. We do not address long-time behavior or optimal growth of constants as $T\to\infty$, as our focus is on finite-time propagation of Fourier-ratio complexity.

The proofs consist of two deterministic ingredients and one probabilistic ingredient. The probabilistic ingredient is Theorem \ref{thm:ZNd_recovery}, which we quote. The deterministic ingredients are estimates comparing Riemann sums to integrals, discrete summation-by-parts estimates for Fourier coefficients of discretized smooth periodic functions, and standard aliasing identities and bounds. These arguments are standard in this context and appear in \cite{IPY2026}. We include complete proofs for convenience and to keep the present paper self-contained.

\subsection{Riemann sum bounds}

\begin{lemma}[Riemann sum for the mean] \label{lem:mean_riemann}
Let $d\ge 1$ and let $F\in C^1([0,1]^d)$ be 1-periodic in each variable. Let $g:{\mathbb Z}_N^d\to{\mathbb C}$ be the discretization $g(x)=F(x/N)$. Then
$$
\left|\frac{1}{N^d}\sum_{x\in{\mathbb Z}_N^d} g(x)-\int_{[0,1]^d}F(u)du\right|\le \frac{C_d}{N}\|F\|_{C^1([0,1]^d)}.
$$
\end{lemma}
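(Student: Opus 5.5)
We will prove this by the standard cell-decomposition argument for Riemann sums. Using the identification $x\in{\mathbb Z}_N^d$ with its representative in $\{0,1,\dots,N-1\}^d$, partition $[0,1]^d$ into the $N^d$ half-open cubes
$$
Q_x=\frac{x}{N}+\left[0,\frac1N\right)^d,\qquad x\in\{0,\dots,N-1\}^d .
$$
Each has volume $N^{-d}$. Since $g(x)=F(x/N)$ and $F$ is $1$-periodic, the choice of representative does not matter. We can write
$$
\frac{1}{N^d}\sum_{x}g(x)-\int_{[0,1]^d}F(u)\,du=\sum_{x}\int_{Q_x}\bigl(F(x/N)-F(u)\bigr)\,du .
$$

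The key step is a pointwise bound on each cell. For $u\in Q_x$, apply the mean value theorem (or the fundamental theorem of calculus along the segment from $x/N$ to $u$, which stays inside the closed cube $\overline{Q_x}\subset[0,1]^d$). This gives
$$
|F(x/N)-F(u)|\le \sup|\nabla F|\,|u-x/N|\le \sqrt d\,\frac{1}{N}\,\|F\|_{C^1([0,1]^d)} .
$$
Here we use that $|\nabla F|\le \sqrt d\max_j\|\partial_jF\|_\infty\le\sqrt d\,\|F\|_{C^1}$ under any reasonable convention for the $C^1$ norm. If $F$ is complex-valued, we apply this to real and imaginary parts separately, or use the integral form of the fundamental theorem of calculus, which holds for complex-valued functions directly. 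This costs at most a constant factor.

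Integrating over $Q_x$ and summing over the $N^d$ cells, each contributes at most $N^{-d}\cdot\sqrt d\,N^{-1}\|F\|_{C^1}$. The total is therefore at most $\sqrt d\,N^{-1}\|F\|_{C^1}$, which is the claim with $C_d=\sqrt d$ (or $2\sqrt d$ in the complex case).

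There is no real obstacle. The only points to check are that the segment from $x/N$ to $u$ lies in the domain where $F$ is $C^1$, which holds since each cube is convex and contained in $[0,1]^d$, and the bookkeeping of the $C^1$-norm convention. Periodicity is not actually needed beyond making the wrap-around indexing of ${\mathbb Z}_N^d$ consistent.
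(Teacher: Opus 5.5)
Your proposal is correct and is the same argument as the paper's proof (partition into cubes of side $1/N$, apply the mean value theorem on each cube, sum over the $N^d$ cubes), with the details written out more fully. One small slip in the constant: combining $|u-x/N|\le\sqrt d/N$ with $|\nabla F|\le\sqrt d\,\|F\|_{C^1([0,1]^d)}$ gives $C_d=d$ rather than $\sqrt d$, which does not affect the lemma.
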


\begin{proof}
Partition $[0,1]^d$ into cubes of side length $1/N$ and use the mean value theorem on each cube. Summing the resulting estimates yields the stated bound.
\end{proof}

\begin{lemma}[Riemann sum for the $L^2$ norm] \label{lem:L2_riemann}
Let $d\ge 1$ and let $F\in C^2([0,1]^d)$ be 1-periodic in each variable. Let $g:{\mathbb Z}_N^d\to{\mathbb C}$ be the discretization $g(x)=F(x/N)$. Then
$$
\left|\frac{1}{N^d}\sum_{x\in{\mathbb Z}_N^d}|g(x)|^2-\int_{[0,1]^d}|F(u)|^2du\right|
\le
\frac{C_d}{N}\|F\|_{C^2([0,1]^d)}^2,
$$
where $C_d>0$ depends only on $d$.

In particular, if
$$
N\|F\|_{L^2([0,1]^d)}^2\ge 4C_d\|F\|_{C^2([0,1]^d)}^2,
$$
then
$$
\|g\|_{L^2({\mathbb Z}_N^d)}\ge \frac{1}{\sqrt{2}}N^{d/2}\|F\|_{L^2([0,1]^d)}.
$$
\end{lemma}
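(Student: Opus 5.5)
The plan is to apply the Riemann sum argument to the function $G=|F|^2$ and then feed the result into a simple lower bound.

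\textbf{Step 1: regularity of $G$.} Since $F\in C^2$ is periodic, $G=F\overline F$ is $C^1$ (indeed $C^2$) and 1-periodic. By the product rule,
$$
\|G\|_{C^1([0,1]^d)}\le C\|F\|_{C^1([0,1]^d)}^2\le C\|F\|_{C^2([0,1]^d)}^2,
$$
because $|\nabla G|\le 2|F||\nabla F|$ and $\|G\|_\infty=\|F\|_\infty^2$.

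\textbf{Step 2: the Riemann sum estimate.} Apply Lemma \ref{lem:mean_riemann} to $G$, whose discretization is $|g|^2$. This gives the displayed estimate with error $\frac{C_d}{N}\|F\|_{C^2}^2$. The only point requiring care is bookkeeping the constant: enlarge $C_d$ as needed so that the same $C_d$ serves in the hypothesis. If one prefers to argue directly, partition $[0,1]^d$ into cubes $Q_x$ of side $1/N$ with corner $x/N$. Then
$$
\Bigl|G(x/N)N^{-d}-\int_{Q_x}G\Bigr|\le N^{-d}\sqrt d\,N^{-1}\|\nabla G\|_\infty.
$$
Summing over the $N^d$ cubes gives the same bound. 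Periodicity makes the cubes tile the torus exactly, so there are no boundary terms.

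\textbf{Step 3: the lower bound.} Assume $N\|F\|_2^2\ge 4C_d\|F\|_{C^2}^2$. Then the error term satisfies $\frac{C_d}{N}\|F\|_{C^2}^2\le \frac14\|F\|_{L^2}^2$. Combining this with Step 2,
$$
N^{-d}\sum_x|g(x)|^2\ge \tfrac34\|F\|_{L^2}^2\ge\tfrac12\|F\|_{L^2}^2.
$$
Multiplying by $N^d$ and taking square roots yields $\|g\|_{L^2({\mathbb Z}_N^d)}\ge 2^{-1/2}N^{d/2}\|F\|_{L^2}$. Here $\|g\|_{L^2({\mathbb Z}_N^d)}$ is the unnormalized counting norm, consistent with Parseval for the $N^{-d/2}$-normalized DFT.

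There is no real obstacle. The only subtlety is keeping the constant $C_d$ in the conclusion identical to the one in the hypothesis, which is handled by defining $C_d$ as the constant from Step 2.
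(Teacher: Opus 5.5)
Your proposal is correct and follows the paper's proof: apply Lemma \ref{lem:mean_riemann} to $|F|^2$, bound its $C^1$ norm by $C_d\|F\|_{C^2([0,1]^d)}^2$ via the product rule, and absorb the error term under the non-degeneracy hypothesis. Your intermediate bound $\frac34\|F\|_{L^2([0,1]^d)}^2$ is slightly sharper than the paper's $\frac12\|F\|_{L^2([0,1]^d)}^2$, and your explicit cube-by-cube argument fills in the sketch the paper gives for Lemma \ref{lem:mean_riemann}.
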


\begin{proof}
Apply Lemma \ref{lem:mean_riemann} to the function $H(u)=|F(u)|^2$. Since
$$
\partial_{u_j}H(u)=2\,\mathrm{Re}\!\bigl(F(u)\overline{\partial_{u_j}F(u)}\bigr),
$$
we have
$$
\|H\|_{C^1([0,1]^d)}\le C_d\|F\|_{C^2([0,1]^d)}^2.
$$
Therefore Lemma \ref{lem:mean_riemann} yields
$$
\left|\frac{1}{N^d}\sum_{x\in{\mathbb Z}_N^d}|F(x/N)|^2-\int_{[0,1]^d}|F(u)|^2du\right|
\le
\frac{C_d}{N}\|F\|_{C^2([0,1]^d)}^2,
$$
which is the first claim.

For the lower bound, write
$$
\frac{1}{N^d}\sum_{x\in{\mathbb Z}_N^d}|g(x)|^2
\ge
\|F\|_{L^2([0,1]^d)}^2-\frac{C_d}{N}\|F\|_{C^2([0,1]^d)}^2.
$$
Under the displayed hypothesis this gives
$$
\frac{1}{N^d}\sum_{x\in{\mathbb Z}_N^d}|g(x)|^2\ge \frac{1}{2}\|F\|_{L^2([0,1]^d)}^2,
$$
so
$$
\|g\|_{L^2({\mathbb Z}_N^d)}^2=\sum_{x\in{\mathbb Z}_N^d}|g(x)|^2\ge \frac{1}{2}N^d\|F\|_{L^2([0,1]^d)}^2,
$$
and taking square roots yields the claim.
\end{proof}

\subsection{Summation by parts and aliasing}

Throughout this subsection, when we write $m \in \mathbb{Z}_N^d$ as
$m=(m_1,\dots,m_d)$, we identify $m$ with its wrapped representative
$$
\widetilde m \in \{-\lfloor N/2 \rfloor,\dots,\lfloor (N-1)/2 \rfloor\}^d
$$
from the definition of wrapped Euclidean magnitude.

\begin{lemma}[Discrete summation by parts] \label{lem:sbp}
Let $d\ge 1$ and let $F\in C^2([0,1]^d)$ be 1-periodic in each variable. Let $g:{\mathbb Z}_N^d\to{\mathbb C}$ be the discretization $g(x)=F(x/N)$. Then for every nonzero $m\in{\mathbb Z}_N^d$,
$$
|\widehat g(m)|\le \frac{C_d N^{d/2}}{|m|^2}\|F\|_{C^2([0,1]^d)}.
$$
\end{lemma}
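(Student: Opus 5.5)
Fix a nonzero $m\in{\mathbb Z}_N^d$, identified with its wrapped representative $\widetilde m$. Pick a coordinate $j$ with $|m_j|=\max_i|m_i|$. Then
$$
|m_j|\ge |m|/\sqrt d,\qquad 1\le |m_j|\le N/2 .
$$
The plan is to sum by parts twice in the single variable $x_j$, using the periodicity of $g$ in each coordinate. Write $\tau_j h(x)=h(x+e_j)$. A shift of the summation variable gives
$$
\widehat{\tau_j g}(m)=\chi(m_j)\widehat g(m).
$$
Hence the second difference $\Delta_j^2 g(x)=g(x+e_j)-2g(x)+g(x-e_j)$ satisfies
$$
\widehat{\Delta_j^2 g}(m)=\bigl(\chi(m_j)-2+\chi(-m_j)\bigr)\widehat g(m)=-4\sin^2(\pi m_j/N)\,\widehat g(m).
$$
This identity is exact on ${\mathbb Z}_N^d$, with no boundary terms, because $g$ is $N$-periodic in $x_j$. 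That periodicity follows from the $1$-periodicity of $F$.

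Next I bound the multiplier from below. Since $|m_j|\le N/2$, the inequality $|\sin(\pi s)|\ge 2|s|$ for $|s|\le 1/2$ gives
$$
4\sin^2(\pi m_j/N)\ge 16\,m_j^2/N^2\ge 16\,|m|^2/(dN^2).
$$
For the other side, Taylor's theorem with the second derivative in $u_j$ gives the pointwise bound
$$
|\Delta_j^2 g(x)|=\bigl|F(\tfrac{x+e_j}{N})-2F(\tfrac xN)+F(\tfrac{x-e_j}{N})\bigr|\le N^{-2}\|\partial_{u_j}^2F\|_\infty\le N^{-2}\|F\|_{C^2}.
$$
The trivial bound on the DFT then gives
$$
|\widehat{\Delta_j^2 g}(m)|\le N^{-d/2}\sum_x|\Delta_j^2g(x)|\le N^{-d/2}\,N^d\,N^{-2}\|F\|_{C^2}=N^{d/2-2}\|F\|_{C^2}.
$$

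Dividing the second estimate by the first yields
$$
|\widehat g(m)|\le \frac{dN^2}{16|m|^2}\,N^{d/2-2}\|F\|_{C^2}=\frac{d}{16}\,\frac{N^{d/2}}{|m|^2}\|F\|_{C^2},
$$
which is the claim with $C_d=d/16$, or any larger constant. The step needing the most care is the choice of the dominant coordinate $m_j$ together with the wrapped representative. This choice is what guarantees that $\sin(\pi m_j/N)$ is comparable to $m_j/N$, rather than small near $m_j\approx N$. It also converts the one-dimensional gain $m_j^{-2}$ into the isotropic gain $|m|^{-2}$, at the cost of a factor $d$. Everything else is an exact identity or a one-line Taylor estimate; the only scaling to verify is that the $N^{-2}$ from the second difference exactly cancels the $N^2$ from inverting the multiplier.
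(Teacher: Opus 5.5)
Your proof is correct and follows essentially the same route as the paper. You pick the dominant coordinate of the wrapped representative, sum by parts twice in that coordinate, bound the multiplier below by a constant times $m_j^2/N^2$, and control the second difference by $N^{-2}\|F\|_{C^2([0,1]^d)}$ via Taylor's theorem. The differences are cosmetic: you use the centered second difference with multiplier $-4\sin^2(\pi m_j/N)$ where the paper uses the forward difference (whose multiplier $(e^{2\pi i m_j/N}-1)^2$ has the same modulus), and you get the explicit constant $C_d=d/16$.
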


\begin{remark}
The appearance of the $C^2$ norm in Lemma 7.3 reflects the use of second-order discrete differences. The argument applies discrete summation by parts twice and controls the resulting second forward differences via a Taylor expansion, which requires bounded second derivatives.

If only $C^1$ regularity were assumed, the argument would yield a $|m|^{-1}$ decay bound rather than $|m|^{-2}$. While such a bound is still valid, it is insufficient to recover the sharp $\ell^1$ growth estimates needed for the sampling theorems in Section 4. Thus the second derivative assumption is precisely what guarantees the decay rate compatible with the stated Fourier ratio bounds.
\end{remark}

\begin{proof}
Write $m=(m_1,\dots,m_d)$ for the wrapped representative used in the definition of $|m|$.
Choose an index $j$ such that
$$
|m_j|=\max_{1\le i\le d}|m_i|.
$$
Then $m_j\neq 0$ and $|m_j|\ge |m|/\sqrt d$. Relabel coordinates so that $j=1$.

For $x\in{\mathbb Z}_N^d$ define the forward difference in the first coordinate by
$$
\Delta_1 g(x)=g(x+e_1)-g(x),
\qquad
\Delta_1^2 g(x)=\Delta_1(\Delta_1 g)(x),
$$
where $e_1=(1,0,\dots,0)$. A direct summation by parts gives
\begin{align*}
\widehat g(m)
&=\frac{1}{N^{d/2}}\sum_{x\in{\mathbb Z}_N^d} e^{-2\pi i x\cdot m/N}g(x)\\
&=\frac{1}{N^{d/2}}\frac{1}{(e^{-2\pi i m_1/N}-1)^2}
\sum_{x\in{\mathbb Z}_N^d} e^{-2\pi i x\cdot m/N}\Delta_1^2 g(x).
\end{align*}
For $|m_1|\le N/2$ one has
$$
|e^{-2\pi i m_1/N}-1|\ge c\,\frac{|m_1|}{N}
$$
with an absolute constant $c>0$. (If $N$ is even and $|m_1|=N/2$, then
$|e^{-2\pi i m_1/N}-1|=2$, so the same bound still holds.)
Therefore
$$
|\widehat g(m)|
\le
C\,N^{d/2}\frac{N^2}{|m_1|^2}\sup_{x\in{\mathbb Z}_N^d}|\Delta_1^2 g(x)|.
$$

Since $g(x)=F(x/N)$ and $F\in C^2([0,1]^d)$ is 1-periodic, a Taylor expansion in the
first coordinate gives the uniform bound
$$
|\Delta_1^2 g(x)|\le C\,N^{-2}\,\|F\|_{C^2([0,1]^d)}
\qquad
\text{for all } x\in{\mathbb Z}_N^d.
$$
Combining these estimates yields
$$
|\widehat g(m)|
\le
C\,N^{d/2}\frac{N^2}{|m_1|^2}\cdot N^{-2}\|F\|_{C^2([0,1]^d)}
=
C\,N^{d/2}\frac{1}{|m_1|^2}\|F\|_{C^2([0,1]^d)}.
$$
Finally, since $|m_1|=\max_i |m_i|\ge |m|/\sqrt d$, we obtain
$$
|\widehat g(m)|
\le
C_d\,N^{d/2}\frac{1}{|m|^2}\|F\|_{C^2([0,1]^d)},
$$
which proves the lemma.
\end{proof}

\begin{lemma}[Continuous Fourier coefficient decay] \label{lem:cont_decay}
Let $d\ge 1$ and let $F\in C^3([0,1]^d)$ be 1-periodic in each variable. Write its Fourier series
$$
F(x)=\sum_{k\in{\mathbb Z}^d} a_k e^{2\pi i k\cdot x},
\qquad
a_k=\int_{[0,1]^d}F(x)e^{-2\pi i k\cdot x}dx.
$$
Then for every nonzero $k\in{\mathbb Z}^d$,
$$
|a_k|\le \frac{C_d}{|k|^3}\|F\|_{C^3([0,1]^d)}.
$$
\end{lemma}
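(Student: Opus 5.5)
The plan is to integrate by parts three times in a single coordinate direction, chosen to be the one carrying the largest component of $k$. This is the continuous analogue of the argument in Lemma \ref{lem:sbp}. Fix a nonzero $k\in{\mathbb Z}^d$ and pick an index $j$ with $|k_j|=\max_i|k_i|$. Then $k_j\neq 0$ and $|k_j|\ge |k|/\sqrt d$; after relabeling we may take $j=1$.

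Write the coefficient as an iterated integral,
$$
a_k=\int_{[0,1]^{d-1}}\left(\int_0^1 F(x)e^{-2\pi i k_1x_1}\,dx_1\right)e^{-2\pi i k'\cdot x'}\,dx',
$$
where $x=(x_1,x')$ and $k=(k_1,k')$. In the inner integral we integrate by parts in $x_1$, using
$$
e^{-2\pi i k_1x_1}=\frac{1}{-2\pi i k_1}\,\partial_{x_1}e^{-2\pi i k_1x_1}.
$$
Because $F$ and its derivatives $\partial_{x_1}F$ and $\partial_{x_1}^2F$ are $1$-periodic in $x_1$, and the exponential is also $1$-periodic since $k_1\in{\mathbb Z}$, the boundary terms at $x_1=0$ and $x_1=1$ cancel at each step. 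After three integrations we obtain
$$
a_k=\frac{1}{(2\pi i k_1)^3}\int_{[0,1]^d}\partial_{x_1}^3F(x)\,e^{-2\pi i k\cdot x}\,dx.
$$
The hypothesis $F\in C^3([0,1]^d)$ guarantees that $\partial_{x_1}^3F$ is continuous and bounded by $\|F\|_{C^3([0,1]^d)}$, so the integral is at most $\|F\|_{C^3}$ in absolute value. This gives
$$
|a_k|\le (2\pi)^{-3}|k_1|^{-3}\|F\|_{C^3([0,1]^d)}\le (2\pi)^{-3}d^{3/2}|k|^{-3}\|F\|_{C^3([0,1]^d)},
$$
which is the claim with $C_d=(2\pi)^{-3}d^{3/2}$.

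There is no real obstacle here. The only point needing care is that the boundary terms vanish. This uses periodicity of the derivatives up to order two in the $x_1$ variable, which follows from periodicity of $F$ together with the $C^3$ assumption on the torus, interpreting $C^3([0,1]^d)$ for periodic $F$ as $C^3$ on the torus. Fubini applies to interchange the order of integration, since all integrands are continuous on a compact set.
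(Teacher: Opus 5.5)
Your proof is correct and follows the same route as the paper: integrate by parts three times in one coordinate $x_j$, use periodicity to remove the boundary terms, then apply $|k_j|\ge |k|/\sqrt d$. Choosing $j$ with $|k_j|=\max_i|k_i|$ is what actually makes that last inequality hold; the paper's wording, ``an index $j$ with $k_j\neq 0$'', glosses over this. Reading the hypothesis as $F\in C^3$ on the torus is also the interpretation the argument needs.
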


\begin{proof}
Choose an index $j$ with $k_j\neq 0$ and integrate by parts three times in the $x_j$ variable. Using periodicity to remove boundary terms yields
$$
a_k=\frac{1}{(2\pi i k_j)^3}\int_{[0,1]^d}\partial_{x_j}^3 F(x)e^{-2\pi i k\cdot x}dx.
$$
Taking absolute values gives $|a_k|\le C|k_j|^{-3}\|F\|_{C^3}$. Since $|k_j|\ge |k|/\sqrt d$, the claim follows.
\end{proof}

\begin{lemma}[Aliasing for discretization] \label{lem:aliasing}
Let $d\ge 1$ and let $F$ be 1-periodic on $[0,1]^d$ with Fourier series coefficients $a_k$. Let $g(x)=F(x/N)$ on ${\mathbb Z}_N^d$. Then for every $m\in{\mathbb Z}_N^d$,
$$
\widehat g(m)=N^{d/2}\sum_{\ell\in{\mathbb Z}^d} a_{m+\ell N},
$$
where $m$ is interpreted via the wrapped representative used in the definition of $|m|$.

\end{lemma}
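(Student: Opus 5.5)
The plan is to substitute the Fourier series of $F$ directly into the definition of the discrete Fourier transform. Then I would exchange the order of summation and evaluate the resulting geometric sums over ${\mathbb Z}_N^d$ by character orthogonality. To make the interchange legitimate, I will assume the Fourier series of $F$ converges absolutely, i.e.\ $\sum_k |a_k|<\infty$. This holds in every case where the lemma is used: for $F\in C^3$, Lemma \ref{lem:cont_decay} gives $|a_k|\lesssim |k|^{-3}$, which is summable in $d\le 3$, and more generally $\sum_k |a_k|<\infty$ whenever $F\in C^s$ with $s>d/2$. Under absolute convergence the identity $F(u)=\sum_k a_k e^{2\pi i k\cdot u}$ holds pointwise, in particular at every grid point $u=x/N$.

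The first step is to write
$$
\widehat g(m)=\frac{1}{N^{d/2}}\sum_{x\in{\mathbb Z}_N^d} e^{-2\pi i x\cdot m/N}\sum_{k\in{\mathbb Z}^d}a_k e^{2\pi i k\cdot x/N}
=\frac{1}{N^{d/2}}\sum_{k\in{\mathbb Z}^d}a_k\sum_{x\in{\mathbb Z}_N^d}e^{2\pi i x\cdot(k-m)/N}.
$$
The interchange is justified because the double sum converges absolutely: the $x$-sum is finite and $\sum_k|a_k|<\infty$. Here $x$ runs over representatives in $\{0,\dots,N-1\}^d$. The summand $e^{2\pi i x\cdot(k-m)/N}$ is $N$-periodic in each coordinate of $x$, since $k-m\in{\mathbb Z}^d$. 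The summand is also unchanged if $m$ is replaced by any integer representative of its class, so using the wrapped representative $\widetilde m$ costs nothing.

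The inner sum factors over the coordinates as $\prod_{j=1}^d\sum_{x_j=0}^{N-1}e^{2\pi i x_j(k_j-m_j)/N}$. Each factor equals $N$ if $k_j\equiv m_j \pmod N$. Otherwise it is a geometric sum with ratio $\ne 1$ that vanishes, because the ratio raised to the $N$th power equals $1$. Hence the inner sum equals $N^d\,\mathbf 1_{\{k\equiv m \bmod N\}}$, i.e.\ it is nonzero exactly when $k=m+\ell N$ for some $\ell\in{\mathbb Z}^d$. Substituting gives $\widehat g(m)=N^{-d/2}N^d\sum_\ell a_{m+\ell N}=N^{d/2}\sum_{\ell}a_{m+\ell N}$, which is the claim.

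The only real obstacle is justifying the pointwise Fourier expansion and the interchange of sums. The statement as written assumes merely that $F$ is periodic, so I would make the absolute-summability hypothesis explicit. This is automatic in the applications, where $U_tf$ and $H_tf$ have rapidly decaying coefficients, namely $|b_k|\lesssim|k|^{-4}$ in the wave case and Gaussian decay in the heat case. The rest is the routine orthogonality computation above.
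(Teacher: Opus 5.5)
Your argument is correct and is the same as the paper's: substitute the Fourier series into $\widehat g(m)$, use orthogonality of characters on ${\mathbb Z}_N^d$ to reduce the inner sum to $N^d\,\mathbf 1_{\{k\equiv m \bmod N\}}$, and reindex $k=m+\ell N$. Your explicit hypothesis $\sum_k|a_k|<\infty$, together with continuity of $F$ so that the series equals $F$ at the grid points, is a genuine improvement, since the paper leaves it implicit.

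One slip in a side remark: $|k|^{-3}$ is not summable over ${\mathbb Z}^3$ (the sum diverges logarithmically). Your $s>d/2$ criterion still gives absolute summability for $C^3$ data in $d=3$, and the $|k|^{-4}$ and Gaussian bounds that actually occur in the applications are summable, so the argument is unaffected.
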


\begin{proof}
Insert the Fourier series for $F$ into $g(x)=F(x/N)$ and then into the definition of $\widehat g(m)$. The inner exponential sum over $x\in{\mathbb Z}_N^d$ vanishes unless $k\equiv m\pmod N$ in each coordinate, in which case it equals $N^d$. Writing $k=m+\ell N$ yields the stated identity.
\end{proof}

\begin{lemma}[Polynomial aliasing bound] \label{lem:alias_poly}
Let $d\ge 1$ and let $N\ge 2$ and $m\in{\mathbb Z}_N^d$ with $m\ne 0$. Then
$$
\sum_{\ell\in{\mathbb Z}^d}\frac{1}{|m+\ell N|^{d+1}}\le C_d\frac{1}{|m|^{d+1}},
$$
where $C_d>0$ depends only on $d$.
\end{lemma}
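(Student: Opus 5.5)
The idea is to split the sum over $\ell$ into the single term $\ell=0$ and the remaining terms, and to show that the remaining terms are uniformly controlled by $N^{-(d+1)}$. Because $m$ is the wrapped representative, we have $|m|\le \sqrt d\,N/2$, so $N^{-(d+1)}\lesssim_d |m|^{-(d+1)}$, and both pieces fit into the claimed bound.

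\emph{The term $\ell=0$.} Since $m\neq 0$, this term is exactly $|m|^{-(d+1)}$. There is nothing further to estimate.

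\emph{Lower bound for $\ell\neq 0$.} Each coordinate of the wrapped representative satisfies $|m_i|\le N/2$. Hence for every $i$,
$$
|m_i+\ell_i N|\ge |\ell_i|N-|m_i|\ge |\ell_i|N-\tfrac N2\ge \tfrac12|\ell_i|N .
$$
Summing the squares over $i$ gives $|m+\ell N|\ge \tfrac12 N|\ell|$ for all $\ell\neq 0$. The one case needing care is a coordinate with $\ell_i=0$. There the bound $|m_i|\ge 0 = \tfrac12|\ell_i|N$ holds trivially, so the coordinatewise inequality remains true.

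\emph{Summing the tail.} Using this lower bound,
$$
\sum_{\ell\neq 0}\frac{1}{|m+\ell N|^{d+1}}\le \frac{2^{d+1}}{N^{d+1}}\sum_{\ell\in{\mathbb Z}^d\setminus\{0\}}\frac{1}{|\ell|^{d+1}} .
$$
The lattice sum on the right converges because the exponent $d+1$ exceeds $d$. To see this, group the $\ell$ by dyadic shells $2^j\le|\ell|<2^{j+1}$. Each shell contains $O_d(2^{jd})$ points, so its contribution is $O_d(2^{-j})$, which is summable in $j$. The tail is therefore at most $C_d N^{-(d+1)}$.

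\emph{Conclusion.} Since $|m|\le \sqrt d\,N/2$, we have $N^{-(d+1)}\le (\sqrt d/2)^{d+1}|m|^{-(d+1)}$. Adding the $\ell=0$ term to the tail bound gives
$$
\sum_{\ell\in{\mathbb Z}^d}\frac{1}{|m+\ell N|^{d+1}}\le C_d\,\frac{1}{|m|^{d+1}} .
$$
The only step requiring any thought is the coordinatewise lower bound, which depends on $m$ being the wrapped representative. This is why the lemma, like Lemma \ref{lem:aliasing}, interprets $m$ via its wrapped representative. The convergence of the lattice sum is routine.
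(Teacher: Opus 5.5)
Your proof is correct and follows the paper's argument. You isolate $\ell=0$, bound $|m+\ell N|$ below by a constant multiple of $N|\ell|$ for $\ell\neq 0$ using $|m_i|\le N/2$, sum the convergent lattice series to get $C_dN^{-(d+1)}$, and convert via $|m|\le \sqrt d\,N/2$. The only difference is cosmetic: you apply the lower bound in every coordinate and sum squares, which gives $|m+\ell N|\ge \tfrac N2|\ell|$, whereas the paper uses only the largest coordinate of $\ell$ and loses a harmless factor of $\sqrt d$.
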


\begin{proof}
Separate the term $\ell=0$:
$$
\sum_{\ell\in{\mathbb Z}^d}\frac{1}{|m+\ell N|^{d+1}}
=
\frac{1}{|m|^{d+1}}
+
\sum_{\ell\neq 0}\frac{1}{|m+\ell N|^{d+1}}.
$$
We identify $m\in{\mathbb Z}_N^d$ with its wrapped representative
$m\in\{-\lfloor N/2\rfloor,\dots,\lfloor (N-1)/2\rfloor\}^d$, so in particular
$|m_j|\le N/2$ for each coordinate.

Fix $\ell\neq 0$. Choose an index $j$ such that
$$
|\ell_j|=\max_{1\le i\le d}|\ell_i|.
$$
Then $|\ell_j|\ge |\ell|/\sqrt d$. Using $|m_j|\le N/2$, we have
$$
|m_j+\ell_j N|
\ge
N|\ell_j|-|m_j|
\ge
N|\ell_j|-\frac{N}{2}
\ge
\frac{N}{2}|\ell_j|.
$$
Therefore
$$
|m+\ell N|\ge |m_j+\ell_j N|\ge \frac{N}{2}|\ell_j|
\ge
\frac{N}{2\sqrt d}|\ell|.
$$
It follows that
$$
\sum_{\ell\neq 0}\frac{1}{|m+\ell N|^{d+1}}
\le
\left(\frac{2\sqrt d}{N}\right)^{d+1}
\sum_{\ell\neq 0}\frac{1}{|\ell|^{d+1}}
=
\frac{C_d}{N^{d+1}},
$$
since $\sum_{\ell\neq 0}|\ell|^{-(d+1)}<\infty$.

Now use the wrapped representative bound $|m|\le (N/2)\sqrt d$ to get
$$
\frac{1}{N^{d+1}}\le C_d\frac{1}{|m|^{d+1}}
\qquad
(m\neq 0).
$$
Combining the estimates,
$$
\sum_{\ell\in{\mathbb Z}^d}\frac{1}{|m+\ell N|^{d+1}}
\le
\frac{1}{|m|^{d+1}}+\frac{C_d}{N^{d+1}}
\le
\frac{C_d}{|m|^{d+1}},
$$
which proves the lemma.
\end{proof}

\begin{lemma}[Gaussian aliasing bound] \label{lem:alias_gauss}
Let $d\ge 1$ and let $a>0$. Let $N\ge 2$ and let $m\in{\mathbb Z}_N^d$. Then
$$
\sum_{\ell\in{\mathbb Z}^d} e^{-a|m+\ell N|^2}\le C(d,a)e^{-a|m|^2},
$$
where $C(d,a)>0$ depends only on $d$ and $a$.
\end{lemma}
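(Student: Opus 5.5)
The plan is to reduce the statement to a one-dimensional estimate by factorization, and then to compare each aliased term with the $\ell=0$ term directly, rather than using the polynomial argument of Lemma \ref{lem:alias_poly}. As in that lemma, I identify $m$ with its wrapped representative, so $|m_j|\le N/2$ for every $j$. Since $e^{-a|v|^2}=\prod_{j=1}^d e^{-a v_j^2}$ and the sum over $\ell\in{\mathbb Z}^d$ is a product of independent sums over $\ell_j\in{\mathbb Z}$,
$$
\sum_{\ell\in{\mathbb Z}^d} e^{-a|m+\ell N|^2}=\prod_{j=1}^d\Bigl(\sum_{\ell_j\in{\mathbb Z}} e^{-a(m_j+\ell_j N)^2}\Bigr).
$$
So it suffices to show that for integers $n$ with $|n|\le N/2$,
$$
\sum_{\ell\in{\mathbb Z}} e^{-a(n+\ell N)^2}\le C(a)\,e^{-an^2}.
$$
Taking the product over $j$ then gives the lemma with $C(d,a)=C(a)^d$.

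For the one-dimensional estimate I would expand the exponent difference:
$$
(n+\ell N)^2-n^2=\ell^2N^2+2n\ell N.
$$
Using $|2n\ell N|\le |\ell| N^2$, which follows from $|n|\le N/2$, this gives
$$
(n+\ell N)^2-n^2\ge N^2|\ell|(|\ell|-1)\ge 0.
$$
Hence every term satisfies $e^{-a(n+\ell N)^2}\le e^{-an^2}e^{-aN^2|\ell|(|\ell|-1)}$. The terms $\ell=0$ and $\ell=\pm1$ together contribute at most $3e^{-an^2}$. Equality can occur for $\ell=\pm1$, for instance when $N$ is even and $n=-N/2$, which is why these terms are kept separate and not claimed to be small.

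For $|\ell|\ge2$ I would use $N\ge2$ and $|\ell|(|\ell|-1)\ge |\ell|$ to bound the exponent by $N^2|\ell|(|\ell|-1)\ge 4|\ell|$. This gives
$$
\sum_{|\ell|\ge2}e^{-aN^2|\ell|(|\ell|-1)}\le 2\sum_{k\ge2}e^{-4ak}<\infty,
$$
and this constant depends only on $a$. Therefore $C(a)=3+2\sum_{k\ge2}e^{-4ak}$ works.

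There is no real obstacle here. The only point needing care is the boundary case $|n|=N/2$, where an aliased term is as large as the main term. This is why the bound carries a constant $C(d,a)$ rather than $1+o(1)$, and the grouping above handles that case uniformly in $N$.
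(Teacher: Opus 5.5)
Your proof is correct, and it takes a genuinely different route from the paper's.

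The paper reuses the template of the polynomial aliasing lemma. It isolates $\ell=0$ and bounds $|m+\ell N|\ge c_dN|\ell|$ uniformly for $\ell\neq0$. Its triangle-inequality justification of that bound breaks down for $|\ell|=1$ when $d\ge4$, although the coordinatewise argument gives $|m+\ell N|\ge N|\ell|/2$ anyway. After discarding the factor $N^2$ in the exponent, the paper reaches the additive estimate $e^{-a|m|^2}+C_1(d,a)$. It then absorbs $C_1(d,a)$ into $C\,e^{-a|m|^2}$ using $|m|\ge1$.

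That absorption step is invalid. The inequality $|m|\ge1$ gives $e^{-a|m|^2}\le e^{-a}$, which is the wrong direction, and for $|m|$ of order $N$ the quantity $e^{-a|m|^2}$ lies far below any fixed constant. Keeping the $N^2$ does not rescue it. A uniform tail bound of size $e^{-aN^2/4}$ still cannot be dominated by $e^{-a|m|^2}$ when $d\ge2$ and $|m|$ is near $\frac N2\sqrt d$. A power tolerates replacing $|m|$ by its maximal size $\sim N$; a Gaussian does not.

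Your termwise comparison $(n+\ell N)^2-n^2\ge N^2|\ell|(|\ell|-1)\ge0$ keeps the factor $e^{-an^2}$ in every aliased term, which is exactly what the multiplicative statement requires. The factorization over coordinates then gives the explicit constant $C(a)^d$. It also correctly isolates the boundary case $|n|=N/2$, where an aliased term equals the main term.

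So your argument proves the lemma as stated. The paper's direct $d$-dimensional additive approach, done carefully, yields only $e^{-a|m|^2}+O_{d,a}(e^{-aN^2/4})$. That weaker bound would still suffice for the heat Fourier-ratio theorem after summing over the $N^d$ frequencies, but not for the lemma itself.
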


\begin{proof}
We identify $m\in{\mathbb Z}_N^d$ with its wrapped representative
$$
m\in \{-\lfloor N/2\rfloor,\dots,\lfloor (N-1)/2\rfloor\}^d,
$$
so in particular each coordinate satisfies $|m_j|\le N/2$.

Write
$$
\sum_{\ell\in{\mathbb Z}^d} e^{-a|m+\ell N|^2}
=
e^{-a|m|^2}
+
\sum_{\ell\ne 0} e^{-a|m+\ell N|^2}.
$$

For $\ell\ne 0$ we have, by the triangle inequality,
$$
|m+\ell N|\ge N|\ell|-|m|.
$$
Since $|m|\le \frac{N}{2}\sqrt d$, this yields
$$
|m+\ell N|
\ge
N|\ell|-\frac{N}{2}\sqrt d
=
N\left(|\ell|-\frac{\sqrt d}{2}\right).
$$
Therefore there exists an absolute constant $c_d>0$ depending only on $d$ such that
$$
|m+\ell N|\ge c_d\,N|\ell|
\qquad
\text{for all } \ell\ne 0.
$$
Indeed, if $|\ell|\ge \sqrt d$ then $|\ell|-\frac{\sqrt d}{2}\ge \frac12|\ell|$, so we may take $c_d=\frac12$; if $1\le|\ell|<\sqrt d$, there are only finitely many such $\ell$ and we can decrease $c_d$ if needed so that the inequality still holds for those finitely many cases.

Consequently, for $\ell\ne 0$,
$$
e^{-a|m+\ell N|^2}\le e^{-a c_d^2 N^2|\ell|^2}.
$$
Hence
$$
\sum_{\ell\ne 0} e^{-a|m+\ell N|^2}
\le
\sum_{\ell\ne 0} e^{-a c_d^2 N^2|\ell|^2}
\le
\sum_{\ell\ne 0} e^{-a c_d^2|\ell|^2}
=:C_1(d,a),
$$
where the last series converges because it is a Gaussian sum on ${\mathbb Z}^d$.

Putting this together,
$$
\sum_{\ell\in{\mathbb Z}^d} e^{-a|m+\ell N|^2}
\le
e^{-a|m|^2}+C_1(d,a).
$$

If $m=0$, then $e^{-a|m|^2}=1$ and the bound follows by taking
$C(d,a)\ge 1+C_1(d,a)$.

If $m\ne 0$, then $|m|\ge 1$, so $e^{-a|m|^2}\le e^{-a}$. Thus
$$
e^{-a|m|^2}+C_1(d,a)
\le
\left(1+C_1(d,a)e^{a}\right)e^{-a|m|^2}.
$$
Therefore in all cases
$$
\sum_{\ell\in{\mathbb Z}^d} e^{-a|m+\ell N|^2}
\le
C(d,a)e^{-a|m|^2}
$$
with $C(d,a)=\max\{1+C_1(d,a),\,1+C_1(d,a)e^{a}\}$, which depends only on $d$ and $a$.
\end{proof}

\subsection{Lattice sums}

\begin{lemma}[A power sum on ${\mathbb Z}_N^d$] \label{lem:powersum}
Let $d\ge 1$. Then
$$
\sum_{m\in{\mathbb Z}_N^d\setminus\{0\}}\frac{1}{|m|^2}\le C_d S_d(N),
$$
where
$$
S_d(N)=
\begin{cases}
1, & d=1,\\
\log N, & d=2,\\
N^{d-2}, & d\ge 3.
\end{cases}
$$
\end{lemma}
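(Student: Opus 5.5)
We will prove the bound by comparing the lattice sum over the wrapped box with an integral, organized by dyadic shells. Identify each $m\in{\mathbb Z}_N^d$ with its wrapped representative $\widetilde m\in\{-\lfloor N/2\rfloor,\dots,\lfloor(N-1)/2\rfloor\}^d$. This identification is a bijection onto a subset of ${\mathbb Z}^d$ whose elements all satisfy $|\widetilde m|\le \sqrt d\,N/2$. Since every term is nonnegative, it therefore suffices to bound
$$
\sum_{\substack{k\in{\mathbb Z}^d\setminus\{0\}\\ |k|\le \sqrt d N}}\frac{1}{|k|^2}.
$$

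For $j\ge 0$, let $A_j=\{k\in{\mathbb Z}^d:2^j\le|k|<2^{j+1}\}$. A standard volume count gives $\#A_j\le C_d2^{jd}$: the unit cubes centered at the points of $A_j$ are disjoint and lie in the ball of radius $2^{j+1}+\sqrt d$. On $A_j$ each term is at most $2^{-2j}$. Every nonzero lattice point in the range has $|k|\ge1$, so it lies in some $A_j$ with $0\le j\le J$, where $J=\lfloor\log_2(\sqrt d N)\rfloor$. This yields
$$
\sum_{0<|k|\le\sqrt dN}\frac{1}{|k|^2}\le C_d\sum_{j=0}^{J}2^{j(d-2)}.
$$

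We then evaluate this geometric-type sum in three cases.
\begin{itemize}
\item For $d\ge3$ the ratio exceeds $1$, so the sum is at most $C_d2^{J(d-2)}\le C_dN^{d-2}$.
\item For $d=2$ every term equals $1$, and the sum is $J+1\le C\log N$. Here we use $N\ge2$, so that $\log N\ge\log 2$ absorbs the additive constant.
\item For $d=1$ the ratio is $1/2$, so the sum is bounded by $2$. Alternatively, one can note directly that $\sum_{k\ne0}k^{-2}=\pi^2/3$.
\end{itemize}

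The only mildly delicate points are bookkeeping. First, the constants must depend only on $d$. Second, the wrapped representative must be treated as an honest injection into ${\mathbb Z}^d$, so that no point is double counted. Third, in dimension two, $N\ge2$ is needed to absorb the constant term into $\log N$. None of these is a real obstacle, so the lattice-point count per shell is the main step.
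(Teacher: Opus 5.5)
Your proof is correct and is essentially the paper's argument: the paper only says to compare the sum with the radial integral $\int_1^{\sqrt d\,N} r^{d-3}\,dr$ in polar coordinates. Your dyadic shell count is a rigorous discrete version of that comparison, since shell $A_j$ contributes about $2^{j(d-2)}$, which is the integral over $[2^j,2^{j+1}]$. You also supply the bookkeeping the paper leaves implicit: the wrapped-representative bijection, the lattice-point count per shell, and the use of $N\ge 2$ when $d=2$.
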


\begin{proof}
Compare the sum to the corresponding integral in polar coordinates in ${\mathbb R}^d$.
\end{proof}

\begin{lemma}[A summable power in three dimensions] \label{lem:pow4_sum}
One has
$$
\sum_{m\in{\mathbb Z}_N^3\setminus\{0\}}\frac{1}{|m|^4}\le C
$$
with an absolute constant $C>0$ independent of $N$.
\end{lemma}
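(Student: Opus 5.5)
The plan is to compare the lattice sum with an integral over $\mathbb R^3$ in polar coordinates, using that the exponent $4$ is larger than the dimension $3$. First I reduce to an integer-lattice statement. Under the wrapped-representative identification from the definition of wrapped Euclidean magnitude, the map $m\mapsto\widetilde m$ is a bijection from ${\mathbb Z}_N^3\setminus\{0\}$ onto a subset of ${\mathbb Z}^3\setminus\{0\}$, namely the nonzero points of the box $\{-\lfloor N/2\rfloor,\dots,\lfloor (N-1)/2\rfloor\}^3$. Since $|m|=|\widetilde m|$ and all terms are positive,
$$
\sum_{m\in{\mathbb Z}_N^3\setminus\{0\}}\frac{1}{|m|^4}\le \sum_{k\in{\mathbb Z}^3\setminus\{0\}}\frac{1}{|k|^4}.
$$
The right-hand side does not involve $N$, so it only remains to show it is finite.

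To bound the full lattice sum, I would decompose ${\mathbb Z}^3\setminus\{0\}$ into dyadic shells $A_j=\{k: 2^j\le |k|<2^{j+1}\}$ for $j\ge 0$. Each shell lies in the ball of radius $2^{j+1}$. The unit cubes centered at lattice points are disjoint and lie in the ball of radius $2^{j+1}+\sqrt3/2\le 2^{j+2}$, so a volume comparison gives $\#A_j\le C\,2^{3j}$. On $A_j$ each term is at most $2^{-4j}$. Summing over shells,
$$
\sum_{k\ne 0}\frac{1}{|k|^4}\le \sum_{j\ge0} C\,2^{3j}\,2^{-4j}=C\sum_{j\ge 0}2^{-j}=2C.
$$
This is the discrete analogue of $\int_{|x|\ge1}|x|^{-4}\,dx=4\pi\int_1^\infty r^{-2}\,dr<\infty$, and it parallels the comparison argument in Lemma \ref{lem:powersum}, but here the sum converges uniformly in $N$.

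No real obstacle arises. The only point needing care is the lattice-point count $\#A_j\lesssim 2^{3j}$, which the volume argument handles, together with confirming that wrapping never produces $\widetilde m=0$ for $m\ne0$. The latter holds because the wrapped representatives are distinct elements of ${\mathbb Z}^3$ and $0$ is the representative of $0$ only.
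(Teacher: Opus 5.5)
Your proof is correct and follows the paper's argument, which bounds the wrapped sum by the full series $\sum_{k\in{\mathbb Z}^3\setminus\{0\}}|k|^{-4}$; that series does not depend on $N$. The paper only asserts that this series converges, and your dyadic-shell count $\#A_j\le C\,2^{3j}$ supplies the justification.
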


\begin{proof}
Compare the finite sum to the absolutely convergent series $\sum_{n\in{\mathbb Z}^3\setminus\{0\}}|n|^{-4}$.
\end{proof}

\begin{lemma}[A Gaussian sum] \label{lem:gauss_sum}
Let $d\ge 1$ and let $a>0$. Then
$$
\sum_{m\in{\mathbb Z}_N^d\setminus\{0\}} e^{-a|m|^2}\le C_d(a),
$$
where $C_d(a)$ depends only on $d$ and $a$.
\end{lemma}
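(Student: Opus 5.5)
We compare the finite sum over wrapped representatives with a Gaussian lattice sum on all of ${\mathbb Z}^d$. The map $m\mapsto \widetilde m$ sends ${\mathbb Z}_N^d$ injectively into ${\mathbb Z}^d$. It sends $0$ to $0$, and by definition $|m|=|\widetilde m|$. Hence
$$
\sum_{m\in{\mathbb Z}_N^d\setminus\{0\}} e^{-a|m|^2}\le \sum_{n\in{\mathbb Z}^d\setminus\{0\}} e^{-a|n|^2},
$$
and it suffices to show that the right-hand side is finite. That quantity depends only on $d$ and $a$, not on $N$, and we define $C_d(a)$ to be it.

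To bound the full lattice sum, we factor the Gaussian across coordinates. Since $|n|^2=n_1^2+\cdots+n_d^2$,
$$
\sum_{n\in{\mathbb Z}^d} e^{-a|n|^2}=\Bigl(\sum_{k\in{\mathbb Z}} e^{-ak^2}\Bigr)^d .
$$
For the one-dimensional sum we use $k^2\ge |k|$ for integers $k$. This gives
$$
\sum_{k\in{\mathbb Z}} e^{-ak^2}\le 1+2\sum_{k\ge1}e^{-ak}=1+\frac{2}{e^{a}-1}<\infty .
$$
Therefore
$$
\sum_{n\ne0}e^{-a|n|^2}\le \Bigl(1+\frac{2}{e^a-1}\Bigr)^d-1=:C_d(a),
$$
which is the claimed bound.

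The only step needing any care is justifying the comparison. Distinct classes in ${\mathbb Z}_N^d$ have distinct wrapped representatives, so no term is counted twice. All terms are nonnegative, so dropping the lattice points outside the box only increases the sum. This argument is a simpler analogue of the one used in Lemma \ref{lem:pow4_sum}.
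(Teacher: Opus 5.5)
Your proposal is correct and follows the same route as the paper: identify each class with its wrapped representative, so that the sum is dominated by the $N$-independent lattice sum $\sum_{n\in{\mathbb Z}^d\setminus\{0\}}e^{-a|n|^2}$. The paper only asserts that this lattice sum is finite, whereas you verify it explicitly via the product factorization and $k^2\ge |k|$, obtaining $C_d(a)=\bigl(1+2/(e^a-1)\bigr)^d-1$. You use the name $C_d(a)$ twice, once for the lattice sum and once for this explicit bound, but that is harmless.
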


\begin{proof}
Since $|m|$ is bounded below by the Euclidean norm of an integer vector, the sum is dominated by $\sum_{n\in{\mathbb Z}^d}e^{-a|n|^2}$, which is finite.
\end{proof}

\subsection{Decay propositions}

\begin{proposition}[Discrete Fourier decay for initial discretization] \label{prop:decay_initial}
Let $d\ge 1$ and let $f\in C^2([0,1]^d)$ be 1-periodic in each variable. Let $g$ be its discretization on ${\mathbb Z}_N^d$. Then for every nonzero $m\in{\mathbb Z}_N^d$ one has
$$
|\widehat g(m)|\le \frac{C_d N^{d/2}}{|m|^2}\|f\|_{C^2([0,1]^d)}.
$$
\end{proposition}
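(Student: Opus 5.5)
This is Lemma \ref{lem:sbp} applied with $F=f$, so the plan is to check the hypotheses and invoke it. The hypotheses of Lemma \ref{lem:sbp} are the same as those here: $F\in C^2([0,1]^d)$, $1$-periodic in each variable, with discretization $g(x)=F(x/N)$. Taking $F=f$ gives the stated bound for every nonzero $m\in{\mathbb Z}_N^d$, with the same constant $C_d$ and with $|m|$ the wrapped Euclidean magnitude.

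For completeness, I would recall the mechanism. Fix nonzero $m$ and choose the coordinate $j$ with $|m_j|=\max_i|m_i|\ge |m|/\sqrt d$. Summing by parts twice in $x_j$ on the torus ${\mathbb Z}_N^d$ gives no boundary terms, and
$$
\widehat g(m)=\frac{1}{N^{d/2}}\frac{1}{(e^{-2\pi i m_j/N}-1)^2}\sum_{x}e^{-2\pi i x\cdot m/N}\Delta_j^2 g(x).
$$
The denominator is bounded below by $c|m_j|/N$, because the wrapped representative satisfies $|m_j|\le N/2$. A second-order Taylor expansion of $f$ along $e_j/N$ gives $|\Delta_j^2 g|\le CN^{-2}\|f\|_{C^2}$. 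Bounding the sum over $N^d$ points by $N^d\sup|\Delta_j^2 g|$, the powers combine as
$$
N^{-d/2}\cdot N^d\cdot N^2|m_j|^{-2}\cdot N^{-2}=N^{d/2}|m_j|^{-2}.
$$
Since $|m_j|^{-2}\le d\,|m|^{-2}$, this is the claim.

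No step is a real obstacle, because Lemma \ref{lem:sbp} already does the work. The only points to watch are the lower bound $|e^{-2\pi i\theta}-1|\ge c|\theta|$ for $|\theta|\le 1/2$, including the endpoint $m_j=N/2$ when $N$ is even, and the fact that periodicity of $f$ is what makes the discrete differences wrap correctly across the boundary of the grid.
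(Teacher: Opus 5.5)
Your proposal is correct and is exactly the paper's proof, which consists of the single line ``apply Lemma \ref{lem:sbp} with $F=f$''. Your recap of the mechanism (dominant coordinate, two summations by parts on the torus, the lower bound on $|e^{\pm 2\pi i m_j/N}-1|$, and the Taylor bound on second differences) faithfully reproduces that lemma's proof. One cosmetic point, which the paper shares: with the forward difference $\Delta_j g(x)=g(x+e_j)-g(x)$ the prefactor is $(e^{2\pi i m_j/N}-1)^{-2}$ rather than $(e^{-2\pi i m_j/N}-1)^{-2}$, but only its modulus enters the estimate, so the bound is unaffected.
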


\begin{proof}
Apply Lemma \ref{lem:sbp} with $F=f$.
\end{proof}

\begin{proposition}[Discrete Fourier decay for wave snapshots] \label{prop:decay_wave}
Let $f\in C^3([0,1]^3)$ be 1-periodic in each variable. Let $g_t$ be the discretized wave snapshot as in Theorem \ref{thm:wave_FR}. Then for every nonzero $m\in{\mathbb Z}_N^3$ one has
$$
|\widehat g_t(m)|\le \frac{C_3(t) N^{3/2}}{|m|^4}\|f\|_{C^3([0,1]^3)}.
$$
\end{proposition}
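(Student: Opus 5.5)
The plan is to combine the aliasing identity of Lemma \ref{lem:aliasing} with continuous Fourier decay of the snapshot coefficients, and then sum the aliases using Lemma \ref{lem:alias_poly}. Write $U_tf(x)=\sum_k b_k(t)e^{2\pi i k\cdot x}$. By Lemma \ref{lem:aliasing} applied to $F=U_tf$,
$$
\widehat g_t(m)=N^{3/2}\sum_{\ell\in{\mathbb Z}^3} b_{m+\ell N}(t),
$$
so it suffices to show that $\sum_{\ell}|b_{m+\ell N}(t)|\le C_3(t)\|f\|_{C^3}|m|^{-4}$ for every nonzero wrapped $m$.

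The first step is the pointwise bound on $b_k(t)$ for $k\ne 0$. Lemma \ref{lem:cont_decay} with $F=f\in C^3$ gives $|a_k|\le C|k|^{-3}\|f\|_{C^3}$. For the multiplier, use $|\sin(2\pi t|k|)|\le 1$ to get $|\sin(2\pi t|k|)|/(2\pi|k|)\le (2\pi|k|)^{-1}$. Together these give
$$
|b_k(t)|\le \frac{C}{|k|^4}\|f\|_{C^3([0,1]^3)},\qquad k\ne 0.
$$
The bound is in fact uniform in $t$; it could be refined to $\min(t,|k|^{-1})$, but that is not needed. The sign and size of $t$ only enter through $C_3(t)$.

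The second step is the alias sum. For $m\ne 0$ in wrapped form, every $m+\ell N$ is nonzero, so
$$
\sum_\ell |b_{m+\ell N}(t)|\le C\|f\|_{C^3}\sum_\ell |m+\ell N|^{-4}.
$$
The exponent $4=d+1$ with $d=3$ is exactly the case covered by Lemma \ref{lem:alias_poly}, which yields $C|m|^{-4}$. Convergence of the Fourier series is not an issue: $\sum_k |k|^{-4}<\infty$ in ${\mathbb Z}^3$, so the aliasing identity holds with absolute convergence.

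The main obstacle is justifying the aliasing identity for $U_tf$. This requires absolute summability of $(b_k)$, which follows from the $|k|^{-4}$ bound just established. It also requires checking that the wrapped-representative convention in Lemma \ref{lem:aliasing} matches the one in Lemma \ref{lem:alias_poly}. Once both are confirmed, combining the two steps gives the stated estimate with $C_3(t)$, which can even be taken independent of $t$.
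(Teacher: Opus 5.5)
Your proposal is correct and follows the same route as the paper's proof. Both arguments run as follows:
\begin{itemize}
\item Lemma \ref{lem:cont_decay} together with the order $-1$ multiplier gives the pointwise bound $|b_k(t)|\le C|k|^{-4}\|f\|_{C^3}$ for $k\neq 0$.
\item The aliasing identity of Lemma \ref{lem:aliasing} expresses $\widehat g_t(m)$ as $N^{3/2}$ times the sum of the aliased coefficients.
\item Lemma \ref{lem:alias_poly} with $d=3$ bounds that alias sum by $C|m|^{-4}$.
\end{itemize}

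Your two extra remarks are valid small refinements of what the paper writes. First, absolute summability of $(b_k)$ is exactly what justifies the aliasing identity. Second, the constant can be taken independent of $t$, because $m+\ell N\neq 0$ for nonzero $m$, so the coefficient $b_0=ta_0$ never enters the alias sum.
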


\begin{proof}
Write the Fourier series of $f$ as
$$
f(x)=\sum_{k\in{\mathbb Z}^3} a_k e^{2\pi i k\cdot x}.
$$
The periodic wave snapshot $U_t f$ has Fourier coefficients
$$
b_k(t)=
\begin{cases}
\dfrac{\sin(2\pi t|k|)}{2\pi |k|}a_k, & k\neq 0,\\[4pt]
t\,a_0, & k=0.
\end{cases}
$$
Lemma \ref{lem:cont_decay} gives $|a_k|\le C|k|^{-3}\|f\|_{C^3([0,1]^3)}$ for $k\neq 0$, hence
$$
|b_k(t)|\le C(t)\frac{1}{|k|^4}\|f\|_{C^3([0,1]^3)}
\qquad
(k\neq 0).
$$

Let $F=U_t f$ and let $g_t(x)=F(x/N)$. Lemma \ref{lem:aliasing} gives
$$
\widehat g_t(m)=N^{3/2}\sum_{\ell\in{\mathbb Z}^3} b_{m+\ell N}(t).
$$
Using the bound on $b_k(t)$ yields, for $m\neq 0$,
$$
|\widehat g_t(m)|
\le
C(t)N^{3/2}\|f\|_{C^3([0,1]^3)}
\sum_{\ell\in{\mathbb Z}^3}\frac{1}{|m+\ell N|^4}.
$$
Lemma \ref{lem:alias_poly} with $d=3$ bounds the aliasing sum by $C|m|^{-4}$, completing the proof.
\end{proof}

\begin{proposition}[Discrete Fourier decay for heat snapshots] \label{prop:decay_heat}
Let $d\ge 1$ and let $f\in C^2([0,1]^d)$ be 1-periodic in each variable. Let $g_t$ be the discretized heat snapshot as in Theorem \ref{thm:heat_FR}. Then for every $m\in{\mathbb Z}_N^d$ one has
$$
|\widehat g_t(m)|\le C_6(d,t) N^{d/2}\|f\|_{C^2([0,1]^d)}e^{-c_0(t)|m|^2}.
$$
\end{proposition}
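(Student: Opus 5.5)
The plan mirrors the proof of Proposition \ref{prop:decay_wave}: pass from the continuous Fourier coefficients of the snapshot to the discrete Fourier transform via aliasing. Write $f(x)=\sum_k a_k e^{2\pi i k\cdot x}$. The heat snapshot $H_t f$ then has coefficients
$$
b_k(t)=e^{-4\pi^2 t|k|^2}a_k .
$$
We only assume $f\in C^2$, so we do not need any decay of $a_k$. The trivial bound
$$
|a_k|\le \int_{[0,1]^d}|f|\le \|f\|_{C^2([0,1]^d)}
$$
suffices, and gives
$$
|b_k(t)|\le \|f\|_{C^2([0,1]^d)}\, e^{-4\pi^2 t|k|^2}.
$$
Since $\sum_k|b_k(t)|<\infty$, the Fourier series of $H_t f$ converges absolutely and uniformly. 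This justifies applying Lemma \ref{lem:aliasing} to $F=H_t f$.

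By Lemma \ref{lem:aliasing},
$$
\widehat g_t(m)=N^{d/2}\sum_{\ell\in{\mathbb Z}^d} b_{m+\ell N}(t),
$$
with $m$ taken as its wrapped representative. Taking absolute values and inserting the coefficient bound gives
$$
|\widehat g_t(m)|\le N^{d/2}\|f\|_{C^2([0,1]^d)}\sum_{\ell\in{\mathbb Z}^d} e^{-4\pi^2 t|m+\ell N|^2}.
$$
Now apply Lemma \ref{lem:alias_gauss} with $a=4\pi^2 t$. This bounds the aliasing sum by $C(d,4\pi^2 t)\,e^{-4\pi^2 t|m|^2}$. The conclusion follows with $c_0(t)=4\pi^2 t$ and $C_6(d,t)=C(d,4\pi^2 t)$. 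The bound holds for every $m$, including $m=0$, because Lemma \ref{lem:alias_gauss} covers $m=0$.

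The only step needing care is the application of Lemma \ref{lem:alias_gauss}, since the constant $c_d$ in its proof must make $|m+\ell N|\ge c_d N|\ell|$ hold uniformly in $N$. If that step turns out to be delicate, I would instead use the coordinatewise bound $|m_j+\ell_jN|\ge \tfrac N2|\ell_j|$ from the proof of Lemma \ref{lem:alias_poly}. This gives $|m+\ell N|\ge \tfrac{N}{2\sqrt d}|\ell|$ for $\ell\ne0$, which makes the $\ell\neq 0$ tail $O(e^{-c t N^2})$. That tail is at most a $t$-dependent constant times $e^{-4\pi^2 t|m|^2}$, because $|m|^2\le dN^2/4$. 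Such a comparison might force a smaller $c_0(t)$, for instance $c_0(t)=\min(4\pi^2 t,\,\pi^2 t/d)$, which the statement allows.
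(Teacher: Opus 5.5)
Your main line is the paper's proof: the crude bound $|a_k|\le\|f\|_{C^2([0,1]^d)}$, then Lemma~\ref{lem:aliasing}, then Lemma~\ref{lem:alias_gauss} with $a=4\pi^2t$. Your absolute-convergence remark justifying the aliasing step is a welcome addition.

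Keep the fallback, because the paper's proof of Lemma~\ref{lem:alias_gauss} has a real gap beyond the $c_d$ step you flagged. It replaces the $\ell\neq0$ tail by the $N$-independent constant $C_1(d,a)$ and then uses $C_1(d,a)\le C_1(d,a)e^{a}e^{-a|m|^2}$, which holds only for $|m|\le 1$. Retaining the factor $e^{-ctN^2}$ and comparing via $|m|^2\le dN^2/4$, as you do, is the correct repair.

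Two small points on the fallback. With your Euclidean bound it yields $c_0(t)=4\pi^2t/d^2$, not $\pi^2t/d$, when $d\ge 5$; this is harmless, since the statement allows any $c_0(t)>0$. In fact the lemma holds as stated: for $|m_j|\le N/2$ one has $(m_j+\ell_jN)^2\ge m_j^2+N^2|\ell_j|(|\ell_j|-1)$, which lets the sum factor over coordinates and gives the bound with the same $a$.
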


\begin{proof}
Write $f(x)=\sum_{k\in{\mathbb Z}^d} a_k e^{2\pi i k\cdot x}$. The periodic heat snapshot $H_t f$ has Fourier coefficients
$$
b_k=e^{-4\pi^2 t|k|^2}a_k.
$$
Since $|a_k|\le \int_{[0,1]^d}|f(x)|dx\le \|f\|_{C^0([0,1]^d)}\le \|f\|_{C^2([0,1]^d)}$, we have the crude bound
$$
|b_k|\le \|f\|_{C^2([0,1]^d)}e^{-4\pi^2 t|k|^2}.
$$
Let $F=H_t f$ and let $g_t(x)=F(x/N)$. Lemma \ref{lem:aliasing} gives
$$
\widehat g_t(m)=N^{d/2}\sum_{\ell\in{\mathbb Z}^d} b_{m+\ell N}.
$$
Using the bound on $b_k$ yields
$$
|\widehat g_t(m)|\le C N^{d/2}\|f\|_{C^2}\sum_{\ell\in{\mathbb Z}^d} e^{-4\pi^2 t|m+\ell N|^2}.
$$
Lemma \ref{lem:alias_gauss} bounds the aliasing sum by $C(d,t)e^{-c_0(t)|m|^2}$, completing the proof.
\end{proof}

\subsection{Fourier ratio theorems}

\begin{proof}[Proof of Proposition \ref{prop:FR_initial}]
Let $F=f$ and let $g(x)=F(x/N)$. By Lemma \ref{lem:L2_riemann},
$$
\|g\|_{L^2({\mathbb Z}_N^d)}\ge \frac{1}{\sqrt{2}}N^{d/2}\|f\|_{L^2([0,1]^d)}.
$$
By Lemma \ref{lem:mean_riemann},
$$
|\widehat g(0)|=\frac{1}{N^{d/2}}\left|\sum_{x\in{\mathbb Z}_N^d}g(x)\right|
\le N^{d/2}\left|\int_{[0,1]^d}f(x)dx\right|+C_d N^{d/2-1}\|f\|_{C^1}.
$$
For $m\neq 0$, Proposition \ref{prop:decay_initial} yields
$$
\sum_{m\neq 0}|\widehat g(m)|\le C_d N^{d/2}\|f\|_{C^2}\sum_{m\neq 0}\frac{1}{|m|^2}.
$$
Lemma \ref{lem:powersum} bounds the power sum by $C_d S_d(N)$. Dividing the resulting bound for $\|\widehat g\|_1$ by the lower bound for $\|\widehat g\|_2=\|g\|_{L^2({\mathbb Z}_N^d)}$ yields the stated Fourier ratio bound.
\end{proof}

\begin{proof}[Proof of Theorem \ref{thm:wave_FR}]
Let $F=U_t f$ and let $g_t(x)=F(x/N)$. By Lemma \ref{lem:L2_riemann},
$$
\|g_t\|_{L^2({\mathbb Z}_N^3)}\ge \frac{1}{\sqrt{2}}N^{3/2}\|U_t f\|_{L^2([0,1]^3)}.
$$

By Lemma \ref{lem:mean_riemann} applied to $F=U_t f$,
$$
\left|\frac{1}{N^3}\sum_{x\in{\mathbb Z}_N^3} g_t(x)-\int_{[0,1]^3}(U_t f)(u)\,du\right|
\le
\frac{C_3}{N}\|U_t f\|_{C^1([0,1]^3)}.
$$
Therefore
$$
|\widehat g_t(0)|
=\frac{1}{N^{3/2}}\left|\sum_{x\in{\mathbb Z}_N^3} g_t(x)\right|
\le
N^{3/2}\left|\int_{[0,1]^3} (U_t f)(u)\,du\right|
+
C_3 N^{1/2}\|U_t f\|_{C^1([0,1]^3)}.
$$

Dividing by the lower bound for $\|g_t\|_{L^2({\mathbb Z}_N^3)}$ and using the definition of $A_{t}^{\mathrm{wave}}$ yields the $A_{t}^{\mathrm{wave}}$ contribution together with an additional $O_t(N^{-1})$ term. Since $U_t f$ is a Fourier multiplier of order $-1$, one has
$$
\|U_t f\|_{C^1([0,1]^3)}\le C(t)\|f\|_{C^3([0,1]^3)}.
$$
Thus this $O_t(N^{-1})$ term is absorbed into the $C_{N,t}^{\mathrm{wave}}$ term defined in the statement of the theorem.

For $m\neq 0$, Proposition \ref{prop:decay_wave} gives
$$
\sum_{m\neq 0}|\widehat g_t(m)|
\le
C_3(t) N^{3/2}\|f\|_{C^3([0,1]^3)}\sum_{m\neq 0}\frac{1}{|m|^4}.
$$
Lemma \ref{lem:pow4_sum} bounds the lattice sum by an absolute constant. Dividing the resulting bound for $\|\widehat g_t\|_1$ by the lower bound for $\|\widehat g_t\|_2=\|g_t\|_{L^2({\mathbb Z}_N^3)}$ yields the stated Fourier ratio bound.
\end{proof}

\begin{proof}[Proof of Theorem \ref{thm:heat_FR}]
Let $F=H_t f$ and let $g_t(x)=F(x/N)$ for $x\in{\mathbb Z}_N^d$.

By Lemma \ref{lem:L2_riemann} applied to $F=H_t f$, and using the hypothesis
$$
N\|H_t f\|_{L^2([0,1]^d)}^2
\ge
4C_d\|H_t f\|_{C^2([0,1]^d)}^2,
$$
we obtain
$$
\|g_t\|_{L^2({\mathbb Z}_N^d)}
\ge
\frac{1}{\sqrt{2}}\,N^{d/2}\,\|H_t f\|_{L^2([0,1]^d)}.
$$
Since $\|\widehat g_t\|_2=\|g_t\|_{L^2({\mathbb Z}_N^d)}$ by Parseval, this gives the required lower bound for the denominator in $FR(g_t)$.

Next we bound $\|\widehat g_t\|_1=|\widehat g_t(0)|+\sum_{m\neq 0}|\widehat g_t(m)|$.

For the zero frequency, we use Lemma \ref{lem:mean_riemann} applied to $F=H_t f$:
$$
\left|\frac{1}{N^d}\sum_{x\in{\mathbb Z}_N^d} g_t(x)-\int_{[0,1]^d}(H_t f)(u)\,du\right|
\le
\frac{C_d}{N}\|H_t f\|_{C^1([0,1]^d)}.
$$
Multiplying by $N^d$ and dividing by $N^{d/2}$ yields
$$
|\widehat g_t(0)|
=
\frac{1}{N^{d/2}}\left|\sum_{x\in{\mathbb Z}_N^d} g_t(x)\right|
\le
N^{d/2}\left|\int_{[0,1]^d}(H_t f)(u)\,du\right|
+
C_d\,N^{d/2-1}\|H_t f\|_{C^1([0,1]^d)}.
$$
Divide by the lower bound for $\|g_t\|_{L^2({\mathbb Z}_N^d)}$ to obtain
$$
\frac{|\widehat g_t(0)|}{\|g_t\|_{L^2({\mathbb Z}_N^d)}}
\le
2\frac{\left|\int_{[0,1]^d}(H_t f)(u)\,du\right|}{\|H_t f\|_{L^2([0,1]^d)}}
+
C(d)\,\frac{\|H_t f\|_{C^1([0,1]^d)}}{\|H_t f\|_{L^2([0,1]^d)}}\,\frac{1}{N}.
$$
By the heat multiplier representation, one has
$$
\|H_t f\|_{C^1([0,1]^d)}\le C(d,t)\|f\|_{C^2([0,1]^d)}.
$$
Therefore the last term is bounded by
$$
C(d,t)\,\frac{\|f\|_{C^2([0,1]^d)}}{\|H_t f\|_{L^2([0,1]^d)}}\,\frac{1}{N},
$$
which is of the form $C_{N,t}^{\mathrm{heat}}$.

Now consider the nonzero frequencies. By Proposition \ref{prop:decay_heat}, for every $m\in{\mathbb Z}_N^d$,
$$
|\widehat g_t(m)|
\le
C_6(d,t)\,N^{d/2}\,\|f\|_{C^2([0,1]^d)}\,e^{-c_0(t)|m|^2}.
$$
Hence
$$
\sum_{m\neq 0}|\widehat g_t(m)|
\le
C_6(d,t)\,N^{d/2}\,\|f\|_{C^2([0,1]^d)}
\sum_{m\in{\mathbb Z}_N^d\setminus\{0\}} e^{-c_0(t)|m|^2}.
$$
By Lemma \ref{lem:gauss_sum},
$$
\sum_{m\in{\mathbb Z}_N^d\setminus\{0\}} e^{-c_0(t)|m|^2}
\le
C_d(c_0(t)),
$$
so
$$
\sum_{m\neq 0}|\widehat g_t(m)|
\le
C(d,t)\,N^{d/2}\,\|f\|_{C^2([0,1]^d)}.
$$
Dividing by the lower bound for $\|g_t\|_{L^2({\mathbb Z}_N^d)}$ gives
$$
\frac{\sum_{m\neq 0}|\widehat g_t(m)|}{\|g_t\|_{L^2({\mathbb Z}_N^d)}}
\le
C(d,t)\,\frac{\|f\|_{C^2([0,1]^d)}}{\|H_t f\|_{L^2([0,1]^d)}},
$$
which is of the form $B_t^{\mathrm{heat}}$.

Combining the zero-frequency bound and the nonzero-frequency bound, we obtain
$$
FR(g_t)=\frac{\|\widehat g_t\|_1}{\|\widehat g_t\|_2}
\le
A_t^{\mathrm{heat}}
+
B_t^{\mathrm{heat}}
+
C_{N,t}^{\mathrm{heat}}
=
r_{N,t}^{\mathrm{heat}},
$$
as claimed.
\end{proof}

\begin{proof}[Proof of Theorem \ref{thm:wave_sampling}]
Apply Theorem \ref{thm:ZNd_recovery} with $d=3$ and $h=g_t$.
\end{proof}

\begin{proof}[Proof of Theorem \ref{thm:heat_sampling}]
Apply Theorem \ref{thm:ZNd_recovery} with $d$ and $h=g_t$.
\end{proof}

\end{document}